\spnewtheorem*{Theorem}{Theorem 4.2}{\bf}{\it}
\begin{document}

\title{Pillar switchings and acyclic embedding in mapping class group}

\author{Chan-Seok Jeong \and Yongjin Song}

\institute{Chan-Seok Jeong \at Departments of Mathematics, Inca University, Incheon 402-751, Korea  \\ \email{csjeong@inha.ac.kr}\at
\and Yongjin Song \\ Departments of Mathematics, Inca University, Incheon 402-751, Korea \\ \email{yjsong@inha.ac.kr}}

\date{Received: date / Accepted: date}

\maketitle

\begin{abstract}
 The braid group $B_g$ is embedded in the ribbon braid group that is defined to be the mapping class group $\Gamma_{0,(g),1}$. By gluing two copies of surface $S_{0,g+2}$ along $g+1$ holes, we get  surface $S_{g,1}$. A pillar switching is a self-homeomorphism of $S_{g,1}$ which switches two pillars of surfaces by $180{}^\circ$ horizontal rotation.  We analyze the actions of pillar switchings on $\pi_1 S_{g,1}$ and then give concrete expressions of pillar switchings in terms of standard Dehn twists. The map $\psi: B_g \rightarrow \Gamma_{g,1}$ sending the generators of $B_g$ to pillar switchings on $S_{g,1}$ is defined by extending the embedding $B_g \hookrightarrow \Gamma_{0,(g+1),1}$. We show that this map is injective by analyzing the actions of pillar switchings on $\pi_1 S_{g,1}$. The second part of this paper is to prove that this map induces a trivial homology homomorphism in the stable range. For the proof we use the categorical delooping. We construct a suitable monoidal 2-functor from tile category to surface category and show that this functor thus induces a map of double loop spaces.
\end{abstract}

\subclass{55P48, 55R37, 57M50}
\keywords{braid group, mapping class group, pillar switching, categorical delooping, monoidal 2-category, tile category, surface category}

\section{Introduction}

Let $S_{g,k}$ be an orientable surface of genus $g$ obtained from a closed surface by removing $k$ open disks. The mapping class group $\Gamma_{g,k}$ is the group of isotopy classes of orientation preserving self-diffeomorphisms of $S_{g,k}$ fixing the boundary of $S_{g,k}$ that consists of $k$ disjoint circles. Let ${\rm Diff}^+(S_{g,k})$ be the group of orientation preserving self-diffeomorphisms of $S_{g,k}$. The base point component of ${\rm Diff}^+(S_{g,k})$ is contractible and $\Gamma_{g,k}$ is isomorphic to $\pi_0 {\rm Diff}^+(S_{g,k})$.

There are standard Dehn twists generating $\Gamma_{g,1}$.

\begin{figure}[hbt]
  \includegraphics[width=0.55\textwidth]{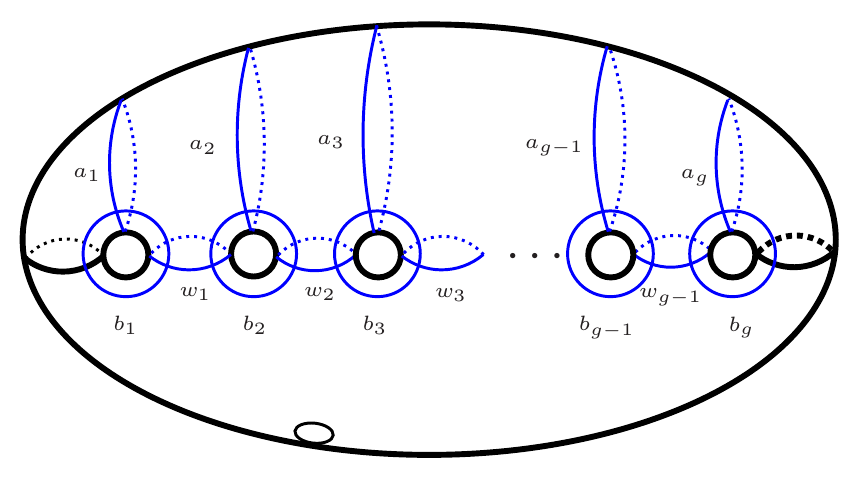}\\
\caption{Standard Dehn twists of $\Gamma_{g,1}$}\label{Dehn twists}
\end{figure}

Let $i: S_{g,r}\rightarrow S_{g,r+1}$ $(r \ge 1)$ and $j:S_{g,r}\rightarrow S_{g+1,r-1}$ $(r\ge 2)$ be the inclusions defined by adding a pair of pants (a copy of $S_{0,3}$) sewn along one boundary component for $i$ and two boundary components for $j$. Also define $l:S_{g,r+1}\rightarrow S_{g,r}$ by gluing a disk to a boundary component.
The maps $i,j,l$ induce maps on the mapping class groups
$$ I:\Gamma_{g,r}\rightarrow\Gamma_{g,r+1},~ J:\Gamma_{g,r}\rightarrow\Gamma_{g+1,r-1},~ L:\Gamma_{g,r+1}\rightarrow\Gamma_{g,r}$$
by extending diffeomorphisms to be the identity on the added pairs of pants and disk. The maps $L:\Gamma_{g,r}\rightarrow\Gamma_{g,r-1}$ and $I\circ J=:\Gamma_{g,r}\rightarrow\Gamma_{g+1,r}$ induce isomorphisms on integral homology in degrees less than $(g-1)/2$. This is Harer's homology stability theorem (\cite{Harer1}) which was improved by Ivanov (\cite{Ivanov1}).  That is we have:

\begin{corollary} $H_*(\Gamma_{g,r};\mathbb Z)$ is independent of $g$ and $r$ in degree $*<(g-1)/2$.
\end{corollary}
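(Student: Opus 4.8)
The plan is to obtain the corollary as a formal consequence of the two families of stabilization isomorphisms just recorded: that $L\colon\Gamma_{g,r}\to\Gamma_{g,r-1}$ and $I\circ J\colon\Gamma_{g,r}\to\Gamma_{g+1,r}$ induce isomorphisms on $H_*(-;\mathbb Z)$ in degrees $*<(g-1)/2$. Fixing a homological degree $n$, the goal is to show that $H_n(\Gamma_{g,r};\mathbb Z)\cong H_n(\Gamma_{g',r'};\mathbb Z)$ whenever $n<(g-1)/2$ and $n<(g'-1)/2$, with the isomorphism built from composites of these stabilization maps; this is precisely what ``independent of $g$ and $r$ in degree $*<(g-1)/2$'' asserts.

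First I would remove the dependence on the number of boundary components. Since the bound $(g-1)/2$ does not involve $r$, for a fixed genus $g$ the chain
$$\Gamma_{g,r}\xrightarrow{\,L\,}\Gamma_{g,r-1}\xrightarrow{\,L\,}\cdots\xrightarrow{\,L\,}\Gamma_{g,1}$$
consists of maps each inducing an isomorphism on $H_n$ in the range $n<(g-1)/2$. Consequently $H_n(\Gamma_{g,r})\cong H_n(\Gamma_{g,2})$ for every $r\ge 1$ (for $r\ge 2$ by composing copies of $L$, and for $r=1$ because $L\colon\Gamma_{g,2}\to\Gamma_{g,1}$ is itself an isomorphism), so in degree $n$ the homology is independent of $r$, and we may normalise to $r=2$.

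Next I would stabilise the genus with $r$ held at the value $2$. Iterating $I\circ J$ gives a tower
$$\Gamma_{g,2}\xrightarrow{\,I\circ J\,}\Gamma_{g+1,2}\xrightarrow{\,I\circ J\,}\Gamma_{g+2,2}\xrightarrow{\,I\circ J\,}\cdots,$$
and each step is a homology isomorphism in degree $n$ as long as $n$ lies below the bound $(g'-1)/2$ attached to the \emph{source} of that step. Because the genus only grows along the tower, the binding constraint is the first bound $(g-1)/2$, so the entire tower consists of $H_n$-isomorphisms once $n<(g-1)/2$; composing them gives $H_n(\Gamma_{g,2})\cong H_n(\Gamma_{g',2})$ for all $g'\ge g$, and the relation is symmetric since the maps are isomorphisms. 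Combining this with the boundary reduction of the previous step yields $H_n(\Gamma_{g,r})\cong H_n(\Gamma_{g',r'})$ whenever both pairs are admissible, which is the claim.

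The deduction is essentially bookkeeping of the stability bounds, so I do not anticipate a deep obstacle. The single point demanding care is that the genus stabilization map $I\circ J$ is defined only for $r\ge 2$, since the handle-attaching map $J$ requires at least two boundary components. This is why the boundary normalisation is taken to $r=2$ rather than $r=1$: for surfaces carrying a single boundary circle the isomorphism $L_*$ first identifies $H_n(\Gamma_{g,1})$ with $H_n(\Gamma_{g,2})$, the genus tower is run at boundary value $2$, and $L_*$ then identifies the result back down with $H_n(\Gamma_{g',1})$. With this caveat the corollary follows directly from Harer's stability theorem as sharpened by Ivanov.
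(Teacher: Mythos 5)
Your proposal is correct and is exactly the deduction the paper intends: the corollary is stated as an immediate consequence of the Harer--Ivanov stability isomorphisms for $L$ and $I\circ J$, and your argument just makes the routine bookkeeping explicit (including the legitimate caveat that $J$, hence $I\circ J$, needs $r\ge 2$, so one normalises at two boundary components before running the genus tower). No gap; nothing further is needed.
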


In this range, for $\Gamma_{\infty}={\lim}_{g\to\infty}\Gamma_{g,1}$, we have isomorphisms:
\[
H_*(\Gamma_{g,r};\mathbb Z)\stackrel{\simeq}\longrightarrow H_*(\Gamma_{\infty};\mathbb Z)\quad\mbox{ for }~ g\ge 2*+1.
\]

Recently Boldsen, O. Randal-Williams, and N. Wahl \cite{Boldsen,Randal-Williams,Wahl} improved the results of Harer and Ivanov.
%This stability range coincides with the range obtained for surfaces with boundary, but improves it slightly for closing the last boundary.

\begin{theorem}[\cite{Boldsen,Randal-Williams,Wahl}]
Let $S_{g,r}$ be a surface of genus $g$ with $r$ boundary components.
\begin{itemize}
\item[\rm(a)] Let $r\ge 1$. For $I:\Gamma_{g,r}\rightarrow\Gamma_{g,r+1}$,
$$I_*:H_k(\Gamma_{g,r})\rightarrow H_k(\Gamma_{g,r+1})$$
is an isomorphism for $2g\ge 3k$ and a monomorphism in all degrees.
\item[\rm(b)] Let $r\ge 2$. For $J:\Gamma_{g,r}\rightarrow\Gamma_{g+1,r-1}$,
$$J_*:H_k(\Gamma_{g,r})\rightarrow H_k(\Gamma_{g+1,r-1})$$
is surjective for $2g\ge 3k-1$, and an isomorphism for $2g\ge 3k+2$.
\item[\rm(c)] For $L:\Gamma_{g,1}\rightarrow \Gamma_{g}$,
$$L_*:H_k(\Gamma_{g,1})\rightarrow H_k(\Gamma_{g})$$
is surjective for $2g\ge 3k-1$, and an isomorphism for $2g\ge 3k$.
\end{itemize}
\end{theorem}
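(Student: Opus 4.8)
The plan is to treat all three statements as instances of a single homological stability machine driven by the high connectivity of an auxiliary complex of arcs on $S_{g,r}$. First I would note that the three maps are not independent: $J$ factors through the boundary- and genus-changing moves, and $L$ caps a boundary with a disk, so it suffices to analyze one ``genus-increasing'' stabilization and one ``boundary-changing'' stabilization and then combine them, the differing codimensions accounting for the different ranges in (a), (b), (c). Concretely, I would fix one or two boundary components of $S_{g,r}$ and introduce the semi-simplicial set $A_\bullet$ whose $p$-simplices are isotopy classes of systems of $p+1$ disjoint embedded arcs with endpoints on those boundary components, so that cutting along the arcs produces surfaces of strictly smaller complexity on which the stabilization maps again apply. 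In modern language this is exactly the ``semi-simplicial set of destabilizations'' of the Randal-Williams--Wahl machine applied to the braided monoidal groupoid of surfaces, but I will argue through the arc complex directly since that is where the geometry lives.

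The technical heart is to prove that the geometric realization $|A_\bullet|$ is highly connected, with connectivity growing linearly in $g$. I would prove this by the ``bad simplices'' / surgery method: given a map of a $d$-sphere into $|A_\bullet|$, make it simplicial after subdividing, then use cut-and-paste surgeries on the arcs to remove excess intersections and deform the sphere into a subcomplex spanned by arcs in a preferred position, running a downward induction on the number of ``bad'' simplices. The essential geometric input is that the link of a $p$-simplex is itself an arc complex of the cut surface, so the linear connectivity bound is inherited inductively once the base cases are checked.

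With the connectivity estimate in hand I would run the standard spectral-sequence argument. The group $\Gamma_{g,r}$ acts on $A_\bullet$, and the stabilizer of a $p$-simplex is isomorphic to the mapping class group of the surface obtained by cutting along its $p+1$ arcs, a surface of lower genus or fewer boundary components to which $I$, $J$, $L$ again apply. The skeletal filtration of the action then yields a spectral sequence
$$E^1_{p,q} = \bigoplus_{[\sigma]} H_q(\mathrm{Stab}(\sigma)) \Longrightarrow H_{p+q}(\Gamma_{g,r}),$$
valid in the range where $|A_\bullet|$ is connected, the sum running over orbits of $p$-simplices. Crucially, the $d^1$-differential is an alternating sum of precisely the stabilization maps $I_*$, $J_*$, $L_*$ whose behavior we wish to control, so the spectral sequence turns connectivity into an inductive handle on those maps.

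The final step is a double induction on the homological degree $k$ and the genus $g$. Assuming the theorem below degree $k$ and in degree $k$ for smaller genus, the inductive hypotheses identify enough $E^1$-entries and their $d^1$-differentials to force the edge homomorphism on the abutment to be an isomorphism (respectively a surjection) in the asserted range, the offset in the surface-changing move producing the shift between ranges for $I$, $J$, and $L$. The main obstacle is twofold: establishing the \emph{sharp} linear connectivity bound for the arc complex, and bookkeeping the slope-$2/3$ ranges precisely enough that the constants $3k$, $3k-1$, and $3k+2$ emerge exactly. This last index chase is delicate and is precisely where the refinements of Boldsen, Randal-Williams, and Wahl improve on the earlier bounds of Harer and Ivanov.
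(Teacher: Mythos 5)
This statement is not proved in the paper at all: it is quoted as background, with the proof residing in the cited works of Boldsen, Randal-Williams, and Wahl. So the comparison to make is between your sketch and those sources. Your outline does capture the architecture of the actual proofs --- a highly connected complex of arcs (or of tethered objects, in the Randal-Williams--Wahl formulation), an action spectral sequence whose $d^1$-differentials are the stabilization maps, and a double induction on degree and genus --- so as a description of the strategy it is accurate.

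However, as a proof it has two genuine gaps. First, the claim in (a) that $I_*$ is a \emph{monomorphism in all degrees} cannot come out of the spectral-sequence machine, which only ever produces conclusions inside the stability range. It follows instead from an elementary splitting: capping off one of the two new boundary circles of the attached pair of pants with a disk defines a map $\Gamma_{g,r+1}\rightarrow\Gamma_{g,r}$ whose composite with $I$ is isotopic to the identity, so $I_*$ is split injective in every degree. Your proposal never mentions this, and no amount of connectivity improvement will yield it. Second, the two items you defer --- the sharp linear connectivity bound for the arc complex and the bookkeeping that produces exactly the constants $3k$, $3k-1$, $3k+2$ --- are not refinements of the argument; they \emph{are} the argument. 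The slope-$2/3$ ranges are precisely what distinguishes this theorem from the earlier Harer--Ivanov bounds that the paper also states, so a proof that treats them as a final index chase has established only the qualitative statement that stability holds in some linear range, not the theorem as stated.
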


For $g>3$, $\Gamma_{g,k}$ is perfect (\cite{Powell}). Apply Quillen's plus construction to the classifying space of it, then we have a simply connected space $B\Gamma_{g,k}^{+}$ and a homology equivalence  $$B\Gamma_{g,k} \longrightarrow B\Gamma_{g,k}^{+}.$$

The direct limit ${\lim}_{g\to\infty}\Gamma_{g,k}$ is the stable mapping class group $\Gamma_{\infty, k}$. The homology stability theorem implies that $B\Gamma_{\infty, k}^{+}$ is independent of $k$ up to homotopy, because it is simply connected. We write $B\Gamma_{\infty}^{+}$ for the common homotopy type.

Let $B_n$ denote the Artin's braid group of $n$-strings with canonical generators $\beta_1,\ldots,\beta_{n-1}$ which have the following presentations:
\begin{align*}
\beta_i\beta_j&=\beta_j\beta_i \quad\mbox{if } |i-j|>2,\\
\beta_i\beta_{i+1}\beta_i&=\beta_{i+1}\beta_i\beta_{i+1} \quad\mbox{for } i=1,\ldots,n-2.
\end{align*}
The second equality is called a braid relation.

Dehn twists, which are standard generators of $\Gamma_{g,1}$, satisfy braid relations. Two Dehn twists $\alpha$, $\beta$ satisfy $\alpha\beta\alpha=\beta\alpha\beta$ if two closed curves representing them intersect at one point. Since there are in $\Gamma_{g,1}$ plenty of braid relations among Dehn twists, there are many embeddings of the braid group into $\Gamma_{g,1}$ via Dehn twists. In this paper we deal with a natural, rather geometric embedding of the braid group into $\Gamma_{g,1}$ via pillar switchings, not via Dehn twists.

Let $\Gamma_{g,(n),1}$ be the mapping class group of surface $S_{g,n+1}$ which consists of isotopy classes of orientation preserving self-homeomorphisms which may permute $n$ boundary components while fix one boundary component pointwise. As a special case, $\Gamma_{0,(n),1}$ is identified with ribbon braid group with $n$ ribbons. The braid group $B_n$ is naturally embedded in $\Gamma_{0,(n),1}$. The surface $S_{0,n+1}$ is regarded as a disk with $n$ holes. The generator $\beta_i$ of $B_n$, which is a crossing of the $i$-th and the $i+1$-st strands, is represented in $\Gamma_{0,(n),1}$ by a half Dehn twist $h_i$ along a simple closed curve surrounding the $i$-th and the $i+1$-st holes in the disk. Now consider two identical surfaces of $S_{0,n+1}$ which are mirror symmetric (See Fig.~\ref{mirror-surface}).

\begin{figure}[ht]
  \includegraphics[width=0.55\textwidth]{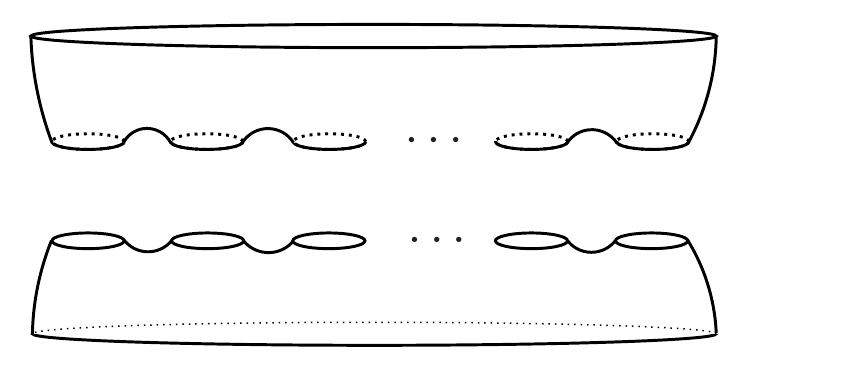}\\
\caption{}\label{mirror-surface}
\end{figure}

By gluing two symmetric surfaces of $S_{0,n+1}$ along $n$ holes, we get a surface $S_{n-1,2}$. A regular neighborhood of the circle obtained by gluing two holes is called a {\it pillar.} See Fig.~\ref{Pillars}.

\begin{figure}[ht]
  \includegraphics[width=0.55\textwidth]{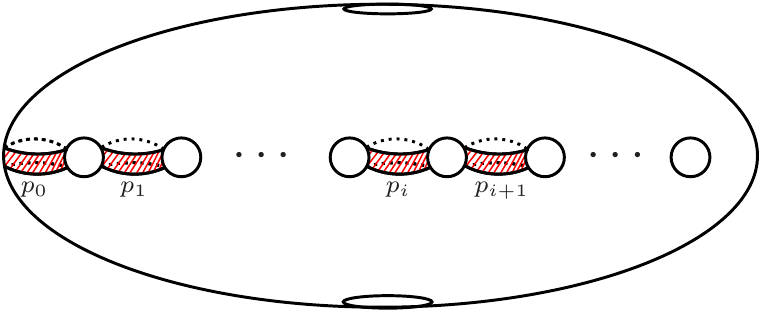}\\
\caption{Pillars $p_0,\ldots,p_g$ of surface $S_{g,2}$}\label{Pillars}
\end{figure}

The half Dehn twists in $\Gamma_{0,(g),1}$ can be extended to self-diffeomorphisms on the surface $S_{g-1, 2}$ which is obtained by gluing two mirror symmetric surfaces of $S_{0, g+1}$. Those self-diffeomorphisms are called  {\it pillar switchings}, denoted by $\sigma_0,\ldots,\sigma_{g-1}$ in $\Gamma_{g-1,2}$, that is,  $\sigma_i$ is a horizontal half twist switching two pillars $p_i$ and $p_{i+1}$. The obvious embedding $B_g \hookrightarrow \Gamma_{0,(g),1}$ is extended to $$\phi : B_g \rightarrow \Gamma_{g-1, 2} \quad \beta_i \mapsto \sigma_{i-1}$$
which is obviously well-defined.

Let $x_1,y_1,\ldots,x_g,y_g$ be the generators of the fundamental group of $S_{g,1}$ which are parallel to
the Dehn twists $a_1,b_1,\ldots,a_g,b_g$, respectively. The mapping class group $\Gamma_{g,1}$ is identified with the subgroup of the automorphism group of the free group on $x_1,y_1,\ldots,x_g,y_g$ that consists of the automorphisms fixing the fundamental relator $R=[y_1,x_1]\cdots[y_g,x_g]$ which represents a loop along the boundary of $S_{g,1}$. That is, each element is uniquely determined by the action on $\pi_1 S_{g,1}$. In Section 2, we give a complete list of the actions of pillar switchings on $\pi_1 S_{g,1}$ (Theorem 2.1). In Theorem 2.2 we give complete expressions of pillar switchings in terms of the standard Dehn twists $a_1, \ldots , a_g, b_1 , \ldots, b_g, w_1, \ldots, w_{g-1}$ (Fig.~\ref{Dehn twists}).

Let $$\phi' : B_g \rightarrow \Gamma_{g, 2} \quad \beta_i \mapsto \sigma_{i}.$$
This map omits $\sigma_0$ in the range. We will deal with this map rather than \\ $\phi : B_g \rightarrow \Gamma_{g-1, 2}$  for a technical reason. Let $\psi:B_g\rightarrow\Gamma_{g,1}$ be the composite
 $$B_g \stackrel{\phi'}{\rightarrow} \Gamma_{g,2} \stackrel{I}{\rightarrow}\Gamma_{g,1},$$
where the map $I: \Gamma_{g,2} \rightarrow \Gamma_{g,1} $ is induced by attaching a disk to a boundary component of $S_{g,2}$. We show (Theorem 4.1) that the homomorphism $\psi : B_g \rightarrow \Gamma_{g,1}$ is injective.

We also have the following:

\begin{Theorem} The homomorphism $\psi_*  : H_*(B_{\infty};\mathbb Z)\rightarrow H_*(\Gamma_{\infty,1};\mathbb Z)$ induced by $\psi$ is trivial.
\end{Theorem}

 The strategy of proof of Theorem 4.2 is categorical delooping, which is analogous to \cite{Song-Tillmann}. However, in this paper we need some hard work in constructing suitable functor from (suitably modified) tile category to surface category.

 The category $\mathcal B=\coprod_{n\ge 0} B_n$ which is a disjoint union of braid groups is a typical braided monoidal category. The group completion of its classifying space $\coprod_{n\ge 0} BB_n$ is homotopy equivalent to $\mathbb Z\times BB_{\infty}^+$. It is well-known that $BB_{\infty}^+ \simeq\Omega_0^2 S^2 \simeq\Omega^2 S^3$. Cohen (\cite{CohenLM}) proved that
$$H_*(B_{\infty};\mathbb Z/2)\simeq \mathbb Z/2 [x_1,x_2,\ldots]$$
as polynomial algebras, where $\deg x_i=2^i-1$ and $Q_1(x_i)=x_{i+1}$, where $Q_1:H_k(X;\mathbb Z/2)\rightarrow H_{2k-1}(X;\mathbb Z/2)$ is the first Araki-Kudo-Dyer-Lashof operation. If we show that $\psi_*: H_*(B_{\infty};\mathbb Z/2)$ $\rightarrow H_*(\Gamma_{\infty,1};\mathbb Z/2)$ preserves $Q_1$, then $\psi_*=0$ because $H_1(\Gamma_{\infty,1};\mathbb Z/2)=0$ ($\Gamma_{g,1}$ is perfect for $g\ge 3$). Since $Q_1$ arises from double loop space structure, if $\psi:BB_{\infty}^+\rightarrow B\Gamma_{\infty,1}^+$ is a double loop space map, then
$$\psi_*: H_*(B_{\infty};\mathbb Z/2)\rightarrow H_*(\Gamma_{\infty,1};\mathbb Z/2)$$
is trivial.

In order to show that $\psi:BB_{\infty}^+\rightarrow B\Gamma_{\infty,1}^+$ is a double loop space map, we lift it to a functor $\Psi:\mathcal T \rightarrow \mathcal S$ between two monoidal 2-categories. $\mathcal T$, called tile category, is a categorical model for braid groups. There is, in this paper, a modification of the tile category given in \cite{Song-Tillmann}.  Surface category $\mathcal S$ was constructed in \cite{Tillmann2,Tillmann3}. In this paper we also make some modifications on $\mathcal S$ in order to construct a desired 2-functor $\Psi$. The construction of a suitable functor is one of the key parts of this paper.

%The structure of this paper is organized as follows. In Section \ref{sec:half-twist}, we introduce the notion of half-twists by identifying the Artin braid group as a subgroup of the mapping class group of a surface, which plays a key role in proof that the Harer map is a well-defined group homomorphism. In Section \ref{sec:Braid-Relation}, we will give both algebraic and geometric interpretations of the images of braid generators under the Harer map $\phi$ by using half-twists and standard Dehn twists. And then we prove that the images of the map $\phi$ satisfy the braid relation of half twists.
%In Section \ref{sec:Main-Theorem-Proof}, we explicitly define a Harer map $\phi$ from the braid group to the mapping class group by half-twists introduced in Section \ref{sec:half-twist} and then we discuss the notion of tile categories and then construct a monoidal 2-category functor from the tile category to the surface category, which naturally extends the map $\phi$. And we will prove Main Theorem.

\section{The pillar switchings}\label{sec:half-twist}

In this section we give a complete list of actions of pillar switchings on $\pi_1 S_{g,1}$. we also give algebraic interpretations of pillar switchings in terms of the standard Dehn twists. For this we need to calculate chains of consecutive actions of Dehn twists on the fundamental group of $S_{g,1}$.

Let $\Gamma_{g,1}$ be the mapping class group of orientation preserving self-homeomorph\-isms of $S_{g,1}$ which fix the  boundary pointwise. Then $\Gamma_{g,1}$ may be regarded as a subgroup of Aut{$\pi_1 S_{g,1}$ which consists of automorphisms fixing the loop along the boundary.
\begin{figure}[ht]
  \includegraphics[width=0.55\textwidth]{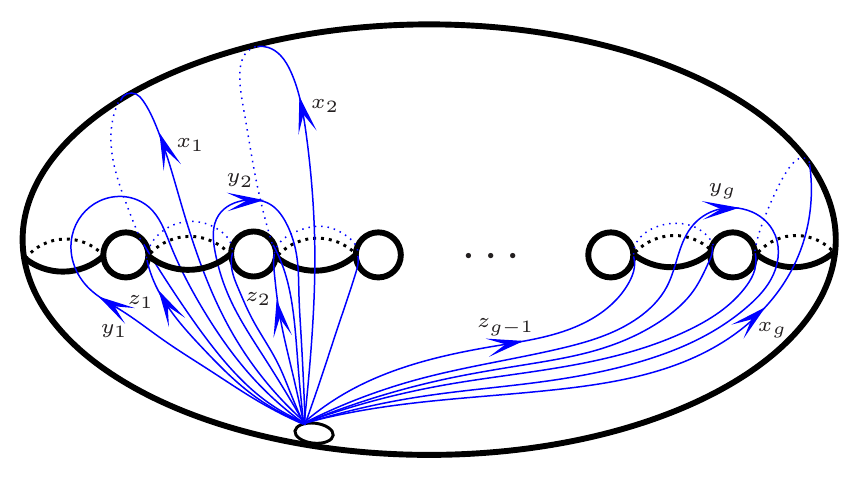}\\
  \caption{The generators $x_1,y_1,\ldots,x_g,y_g$ of $\pi_1 S_{g,1}$}\label{Generating-Loop}
\end{figure}

We first need to understand the actions of the standard Dehn twists on the fundamental group of surface $S_{g,1}$. Let $x_1,y_1,\ldots,x_g,y_g$ be generators of the fundamental group of $S_{g,1}$ which are parallel to
the Dehn twists $a_1,b_1,\ldots,a_g,b_g$, respectively. The mapping class group $\Gamma_{g,1}$ is identified with the subgroup of the automorphism group of the free group on $x_1,y_1,\ldots,x_g,y_g$ that consists of the automorphisms fixing the fundamental relator $R=[y_1,x_1]\cdots[y_g,x_g]$. The following is a well-known result.

\begin{lemma}[The actions of Dehn twists] Let $a_1,\ldots,a_g,b_1,\ldots,b_g,w_1,\ldots,w_{g-1}$ be the Dehn twists as in Fig.~\ref{Dehn twists} which generate $\Gamma_{g,1}$. Then they act on $\pi_1 S_{g,1}$ as follows$:$
$$
\begin{aligned}
a_i:
y_i&\mapsto y_ix_i^{-1}\\
b_i: x_i&\mapsto x_iy_i\\
w_i:
x_i&\mapsto z_i^{-1}y_{i+1}x_{i+1}y_{i+1}^{-1}\\
y_i&\mapsto y_iz_i\\
y_{i+1}&\mapsto z_i^{-1}y_{i+1}
\end{aligned} 
$$
where $z_i=x_i^{-1}y_{i+1}x_{i+1}y_{i+1}^{-1}$ and these automorphisms fix the generators that do not appear in the above list.
\end{lemma}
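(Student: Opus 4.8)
The plan is to read each Dehn twist action off the defining geometric property that a right-handed twist $T_c$ along a simple closed curve $c$ is supported in a thin annular neighborhood $A$ of $c$ and is the identity outside $A$. To evaluate $T_c$ on a based loop $\gamma$, I first isotope $\gamma$ into transverse position with $c$, and then use that $T_c(\gamma)$ agrees with $\gamma$ except that each arc of $\gamma$ crossing $A$ is dragged once around the core $c$. At the level of $\pi_1 S_{g,1}$ this means that at every transverse intersection of $\gamma$ with $c$ a based copy of $c$ (joined to the common basepoint on the boundary along the relevant subarc of $\gamma$) is inserted, with orientation fixed by the sign of the crossing and the handedness of the twist. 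I will apply this local-surgery description to the generators $x_1,y_1,\ldots,x_g,y_g$ of Figure~\ref{Generating-Loop} using the configuration of twisting curves in Figure~\ref{Dehn twists}.

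For $a_i$ and $b_i$ the computation is immediate. The twisting curve of $a_i$ is isotopic to $x_i$ and that of $b_i$ to $y_i$; among the generators, $x_i$ and $y_i$ meet transversally in a single point and no other generator crosses either curve. Hence $a_i$ modifies only $y_i$ and $b_i$ modifies only $x_i$, each by one inserted factor. Fixing the orientation convention $\langle x_i,y_i\rangle=+1$ (equivalently $\langle y_i,x_i\rangle=-1$) pins the signs and yields $a_i\colon y_i\mapsto y_ix_i^{-1}$ and $b_i\colon x_i\mapsto x_iy_i$, all other generators fixed.

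The curve of $w_i$ represents $z_i=x_i^{-1}y_{i+1}x_{i+1}y_{i+1}^{-1}$, with homology class $[z_i]=[x_{i+1}]-[x_i]$. The main step is to locate the crossings of $z_i$ with the generators: from Figure~\ref{Dehn twists} the curve $z_i$ meets $y_i$, $x_i$, $x_{i+1}$, and $y_{i+1}$ and is disjoint from all others. In first homology the Picard--Lefschetz rule $v\mapsto v+\langle v,z_i\rangle z_i$ already fixes the coefficients, since $\langle y_i,z_i\rangle=+1$, $\langle y_{i+1},z_i\rangle=-1$, and $\langle x_i,z_i\rangle=\langle x_{i+1},z_i\rangle=0$. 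Refining this to $\pi_1$ by tracking the basepoint-connecting subarcs then produces the stated formulas: $y_i$ crosses $z_i$ once on the side that inserts the factor on the right, giving $y_i\mapsto y_iz_i$; $y_{i+1}$ crosses on the opposite side, giving $y_{i+1}\mapsto z_i^{-1}y_{i+1}$; the generator $x_i$ crosses $z_i$ twice with opposite signs, but its intervening arc is carried around so that the two inserted copies conjugate rather than cancel, producing $x_i\mapsto z_i^{-1}x_iz_i=z_i^{-1}y_{i+1}x_{i+1}y_{i+1}^{-1}$ (using $x_iz_i=y_{i+1}x_{i+1}y_{i+1}^{-1}$); while the two opposite crossings of $x_{i+1}$ are positioned so that the inserted factors cancel and leave $x_{i+1}$ fixed.

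I will close by checking that each automorphism lies in $\Gamma_{g,1}$, i.e. fixes the relator $R=[y_1,x_1]\cdots[y_g,x_g]$. For $a_i$ and $b_i$ only the factor $[y_i,x_i]$ is touched and is readily seen to be preserved; for $w_i$ only the adjacent product $[y_i,x_i][y_{i+1},x_{i+1}]$ changes, and substituting the formulas above together with $y_{i+1}x_{i+1}y_{i+1}^{-1}=x_iz_i$ collapses the image back to $[y_i,x_i][y_{i+1},x_{i+1}]$, confirming both the answer and membership in the mapping class group. The main obstacle is the $\pi_1$-level bookkeeping for $w_i$: because $z_i$ meets four of the generators, one must choose the basepoint-connecting arcs coherently to get the sides of the inserted factors right---this is precisely what separates the conjugation on $x_i$ from the one-sided multiplications on $y_i$ and $y_{i+1}$ and from the cancellation on $x_{i+1}$, data that the homology computation alone cannot detect.
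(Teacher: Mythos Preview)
The paper does not supply a proof of this lemma; it is stated as ``a well-known result'' and used as input for the subsequent computations. Your proposal fills in exactly the standard argument: evaluate each Dehn twist on the based generators by inserting a conjugated copy of the twisting curve at every transverse crossing, and then verify algebraically that the relator $R$ is preserved. This is correct and is the expected proof.

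One small remark on the $w_i$ case. Your description of the crossing pattern (that $x_i$ meets $z_i$ twice with conjugating insertions while $x_{i+1}$ meets it twice with cancelling insertions) is one consistent way to realize the computation, but it is tied to a particular drawing of the based loops relative to the curve of $w_i$. Depending on how Figures~\ref{Dehn twists} and~\ref{Generating-Loop} are drawn, one may instead have $x_{i+1}$ genuinely disjoint from the twisting curve and $x_i$ crossing it in a pattern that yields $z_i^{-1}x_iz_i$ directly. Either way the outcome is the same, and your closing check that $[y_i,x_i][y_{i+1},x_{i+1}]$ is fixed---which reduces to the identity
\[
[y_iz_i,\,z_i^{-1}x_iz_i]\,[z_i^{-1}y_{i+1},\,x_{i+1}]
=[y_i,x_i]\,\bigl(x_iz_i^{-1}x_i^{-1}z_i\bigr)\bigl(z_i^{-1}x_iz_ix_i^{-1}\bigr)\,[y_{i+1},x_{i+1}]
=[y_i,x_i][y_{i+1},x_{i+1}]
\]
---is the right certificate that the signs and sides have been chosen consistently.
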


Note that $z_i$ represents a loop parallel to the Dehn twist $w_i$.

%\noindent{\bf Action of pillar switchings.}

As an element of $\Gamma_{g,1}$ each pillar switching acts on the fundamental group of $S_{g,1}$ which is a free group on $y_1, x_1, \ldots , y_g, x_g$:

\begin{theorem} Each pillar switching $\sigma_i$ acts on $\pi_1 S_{g,1}$ as follows$:$
\begin{align*}
{\sigma}_0: x_1 &\longmapsto z_1^{-1}y_1z_1y_1^{-1}z_1\\
y_1&\longmapsto z_1^{-1}y_1^{-1}z_1\\
y_2&\longmapsto z_1^{-1}y_1z_1y_2,
\\
{\sigma}_{i-1}:  x_{i-1}&\longmapsto y_i^{-1}x_{i-1}y_i\\
x_i&\longmapsto z_i^{-1}y_iz_iy_i^{-1}x_{i-1}z_i\\
y_{i-1}&\longmapsto y_{i-1}y_i\\
y_i&\longmapsto z_i^{-1}y_i^{-1}z_i\\
y_{i+1}&\longmapsto z_i^{-1}y_ix_i^{-1}y_{i+1}x_{i+1}=z_i^{-1}y_iz_iy_{i+1},
\\
{\sigma}_{g-1}: x_{g-1}&\longmapsto y_g^{-1}x_{g-1}y_g\\
x_g&\longmapsto x_gz_{g-1}^{-1}x_g^{-1}\\
y_{g-1}&\longmapsto y_{g-1}y_g\\
y_g&\longmapsto x_gy_g^{-1}x_g^{-1}.
\end{align*}
Here, $2\le i \le g-1$ and these automorphisms fix the generators that do not appear in the list.
\end{theorem}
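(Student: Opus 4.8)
The plan is to read off each image directly from the geometry of the switching, using the fact that $\sigma_i$ is nothing but a $180^\circ$ horizontal rotation interchanging the two pillars it moves and the identity elsewhere. First I would pin down the support of $\sigma_i$: once the rotation is carried out inside a regular neighborhood of its two pillars together with the band joining them, the map is the identity on the complement. Hence any generator whose representing loop can be pushed off this neighborhood is fixed, which already explains why only five generators are listed for a generic $\sigma_{i-1}$ and only three for the end switchings $\sigma_0,\sigma_{g-1}$. The problem then reduces to tracing the few loops that genuinely meet the two pillars.

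For an interior switching $\sigma_{i-1}$ ($2\le i\le g-1$) I would use the mirror-symmetric construction of $S_{g,1}$ from two copies of the holed disk $S_{0,g+2}$: the pillar switching is the double, across the gluing circles, of the half Dehn twist that interchanges the two holes producing the pillars $p_{i-1}$ and $p_i$. Writing the classical Artin half-twist on the hole loops and then translating through the dictionary that expresses each $x_j,y_j$ in terms of the hole loops of the two copies, I would push $x_{i-1},x_i,y_{i-1},y_i,y_{i+1}$ through the rotation one at a time. The conjugating factors $z_i^{\pm1}$ enter precisely because, after an arc is rotated, its endpoints must be reconnected to the basepoint along paths that cross the curve parallel to $w_i$; each such crossing contributes one copy of $z_i=x_i^{-1}y_{i+1}x_{i+1}y_{i+1}^{-1}$, and it is the number and sign of these crossings that produces words such as $x_i\mapsto z_i^{-1}y_iz_iy_i^{-1}x_{i-1}z_i$.

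The two end switchings need separate treatment, since the capping map $I\colon\Gamma_{g,2}\to\Gamma_{g,1}$ fills one boundary disk and so destroys the left--right symmetry at the ends of the chain of pillars. I would repeat the tracing for $\sigma_0$, which moves only $x_1,y_1,y_2$ and whose conjugations are all by $z_1$, and for $\sigma_{g-1}$, which moves only $x_{g-1},x_g,y_{g-1},y_g$; here one end pillar abuts the filled-in disk, which suppresses one of the would-be $z$-conjugations and explains why these formulas are shorter, with $z_{g-1}$ surviving only in the single factor $x_g\mapsto x_gz_{g-1}^{-1}x_g^{-1}$.

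Finally I would confirm that each candidate is a genuine element of $\Gamma_{g,1}\subset\operatorname{Aut}(\pi_1 S_{g,1})$ by checking that it fixes the boundary relator $R=[y_1,x_1]\cdots[y_g,x_g]$; this is a stringent test that would catch any misplaced letter. Once the Dehn-twist factorizations of Theorem~2.2 are available, an independent cross-check is to compose the elementary substitutions of the preceding Dehn-twist lemma along those factorizations and compare. I expect the main obstacle to be exactly the bookkeeping of the basepoint tails and orientations --- determining which power of $z_i$ is inserted at each stage --- since a single miscounted crossing of the $w_i$-curve alters the word, so the computation must be arranged so that every such crossing is accounted for once and only once.
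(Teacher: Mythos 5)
Your proposal takes essentially the same route as the paper: the paper's proof of Theorem 2.1 consists entirely of figures (Fig.~5--16) that trace each affected generator loop through the half-rotation, exactly the geometric tracing you describe, with the unlisted generators fixed because they can be isotoped off the support of the switching. Your added sanity checks (fixing the relator $R=[y_1,x_1]\cdots[y_g,x_g]$, and cross-checking against the Dehn-twist factorizations, which is in effect how the paper proves Theorem 2.2) are sensible supplements but not a different method.
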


\begin{proof} Fig.~\ref{fig5}-\ref{fig16} prove this theorem.

\begin{figure}[h!]
  \includegraphics[width= 0.75\textwidth]{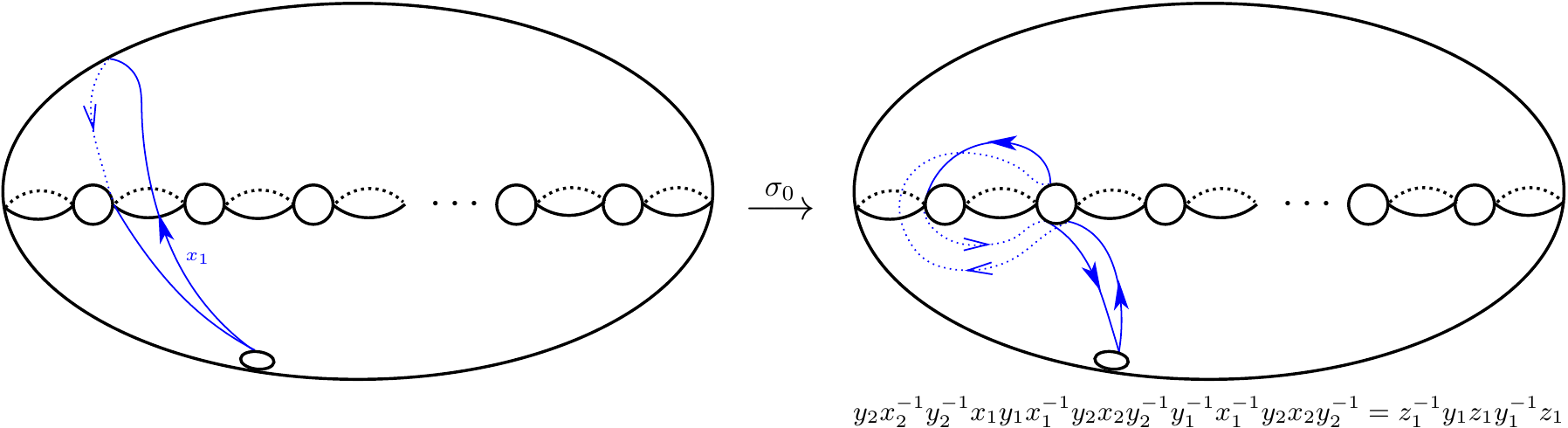}\\
  \caption{The action of ${\sigma}_0$ on $x_1$}\label{fig5}
\end{figure}
\begin{figure}[h!]
  \includegraphics[width= 0.75\textwidth]{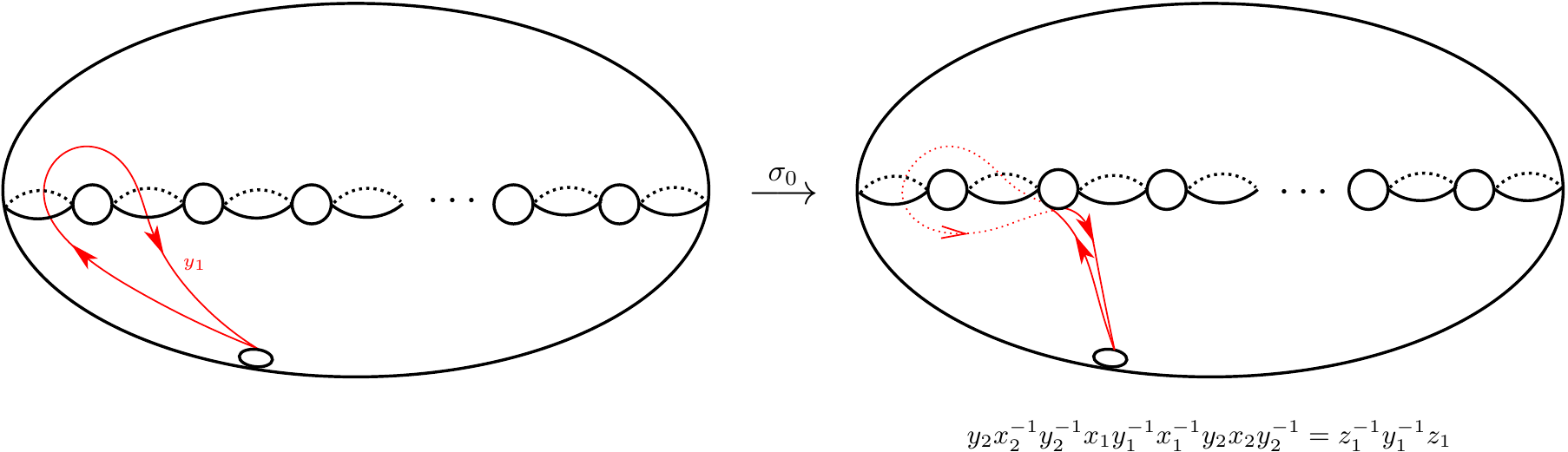}\\
  \caption{The action of ${\sigma}_0$ on $y_1$}\label{fig6}
\end{figure}
\begin{figure}[h!]
  \includegraphics[width= 0.75\textwidth]{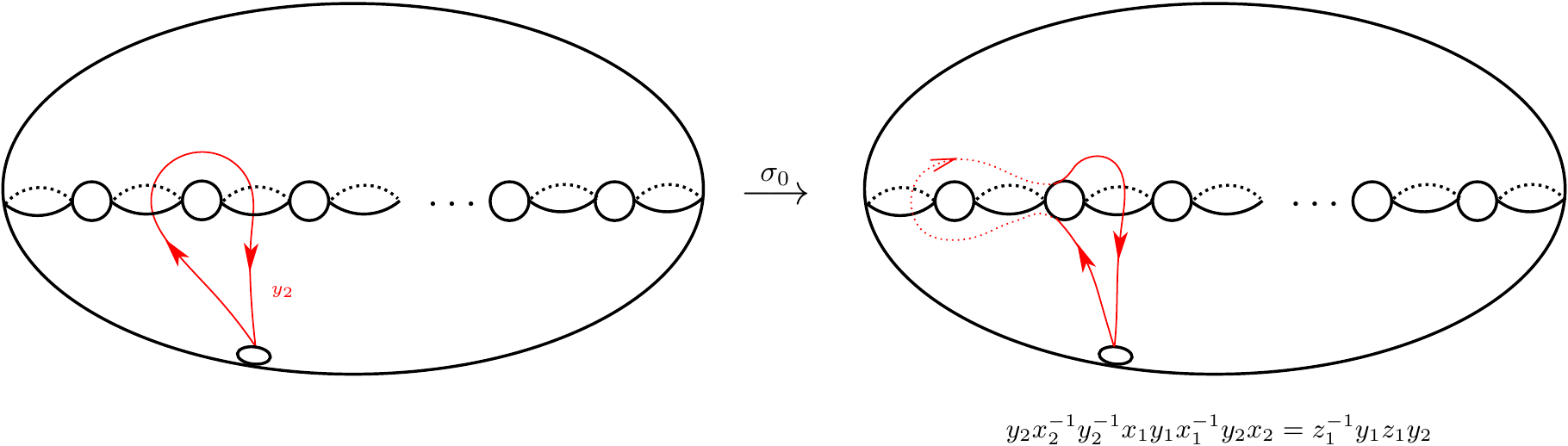}\\
  \caption{The action of ${\sigma}_0$ on $y_{2}$}\label{fig7}
\end{figure}
\begin{figure}[h!]
  \includegraphics[width= 0.75\textwidth]{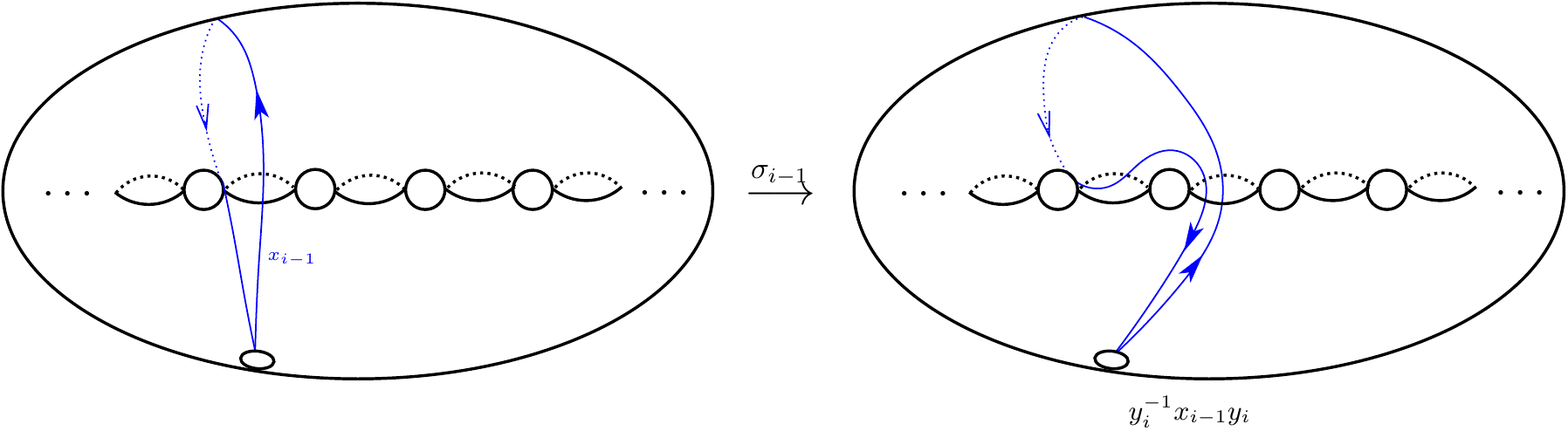}\\
  \caption{The action of ${\sigma}_{i-1}$ on $x_{i-1}$}\label{fig8}
\end{figure}
\begin{figure}[h!]
  \includegraphics[width= 0.75\textwidth]{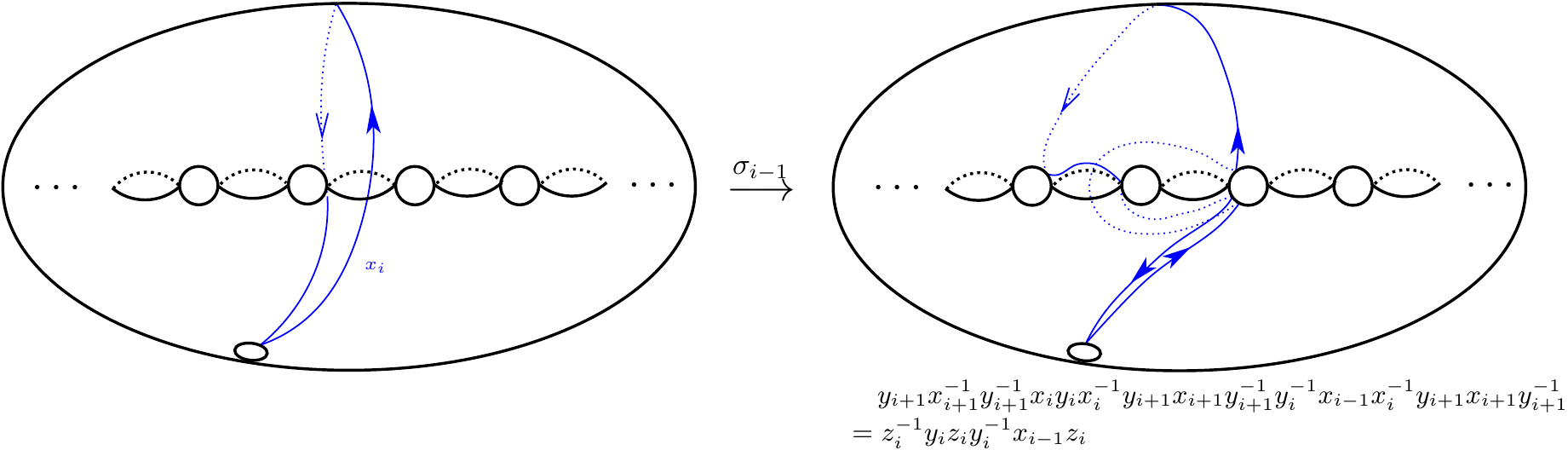}\\
  \caption{The action of ${\sigma}_{i-1}$ on $x_i$}\label{fig9}
\end{figure}
\begin{figure}[h!]
  \includegraphics[width= 0.75\textwidth]{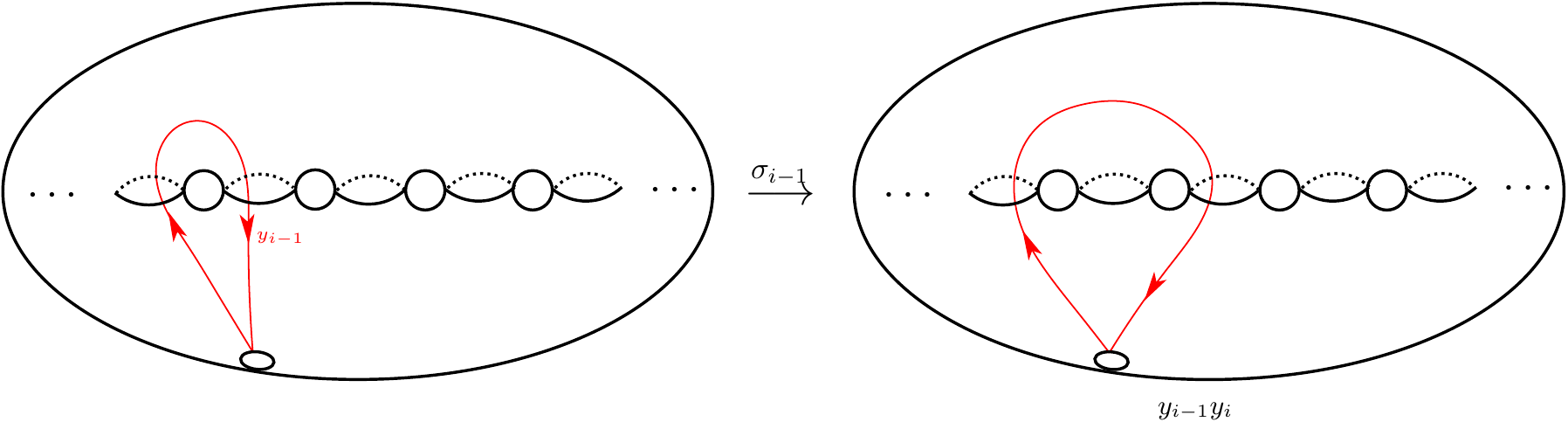}\\
  \caption{The action of ${\sigma}_{i-1}$ on $y_{i-1}$}\label{fig10}
\end{figure}
\begin{figure}[h!]
  \includegraphics[width= 0.75\textwidth]{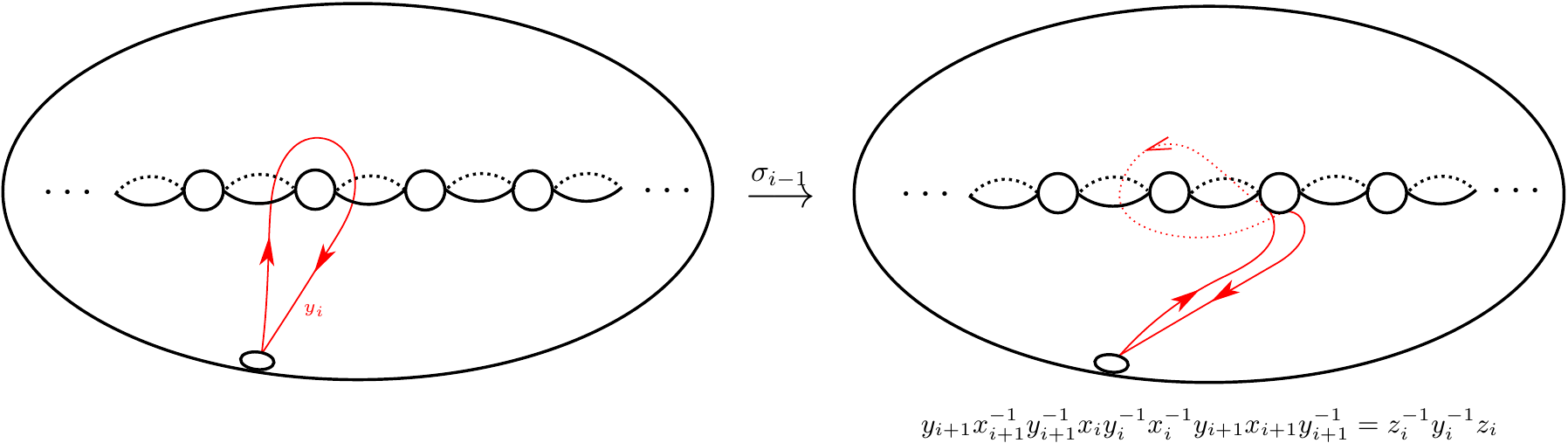}\\
  \caption{The action of ${\sigma}_{i-1}$ on $y_{i}$}\label{fig11}
\end{figure}
\begin{figure}[h!]
  \includegraphics[width= 0.75\textwidth]{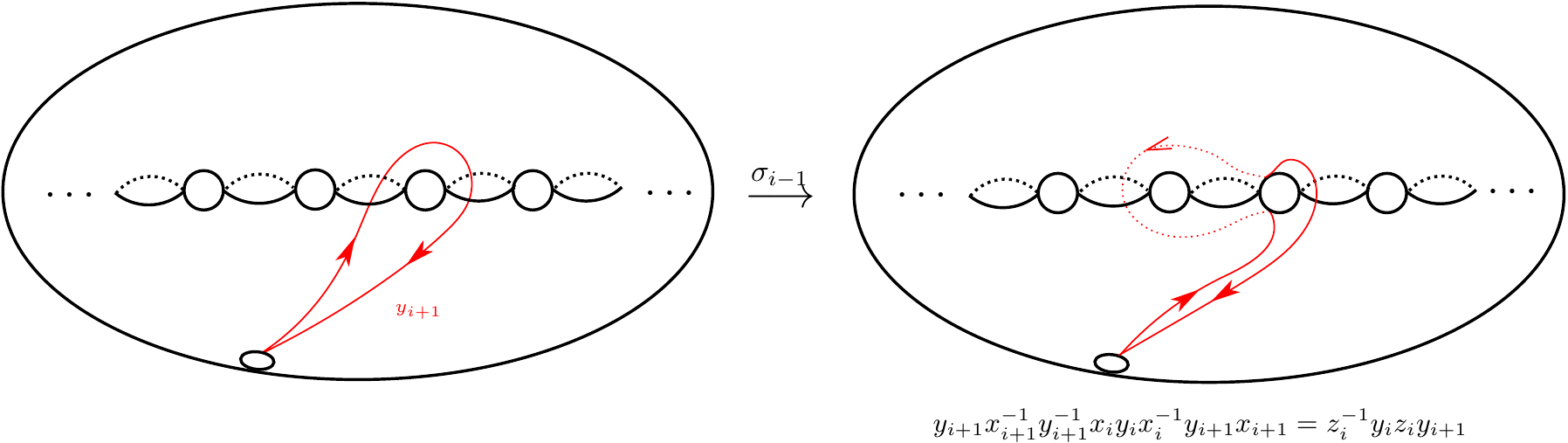}\\
  \caption{The action of ${\sigma}_{i-1}$ on $y_{i+1}$}\label{fig12}
\end{figure}
\begin{figure}[h!]
  \includegraphics[width= 0.75\textwidth]{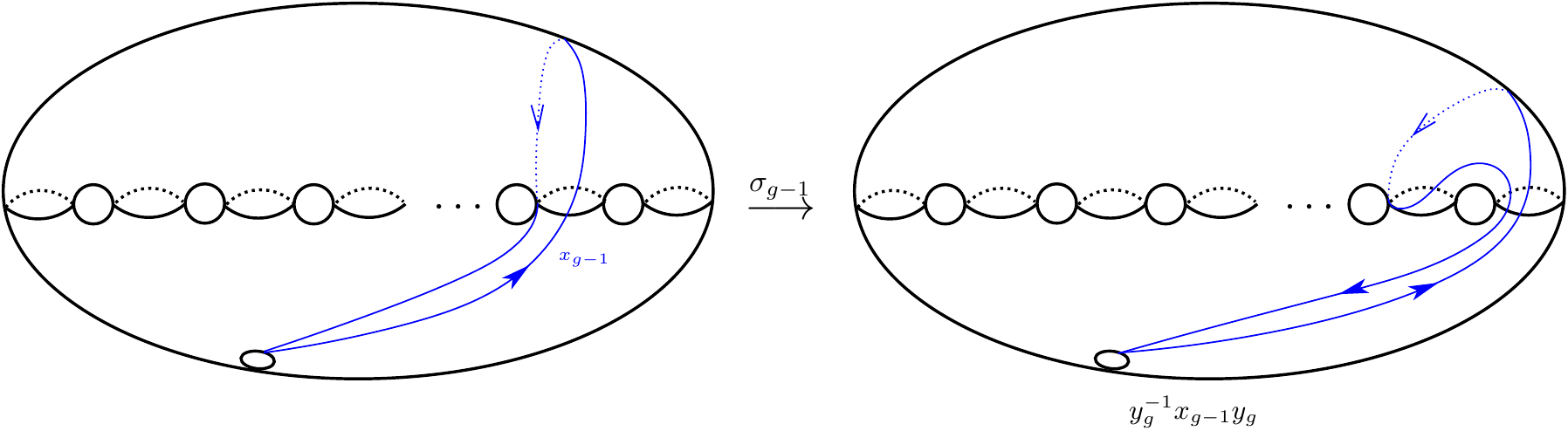}\\
  \caption{The action of ${\sigma}_{g}$ on $x_{g-1}$}\label{fig13}
\end{figure}
\begin{figure}[h!]
  \includegraphics[width= 0.75\textwidth]{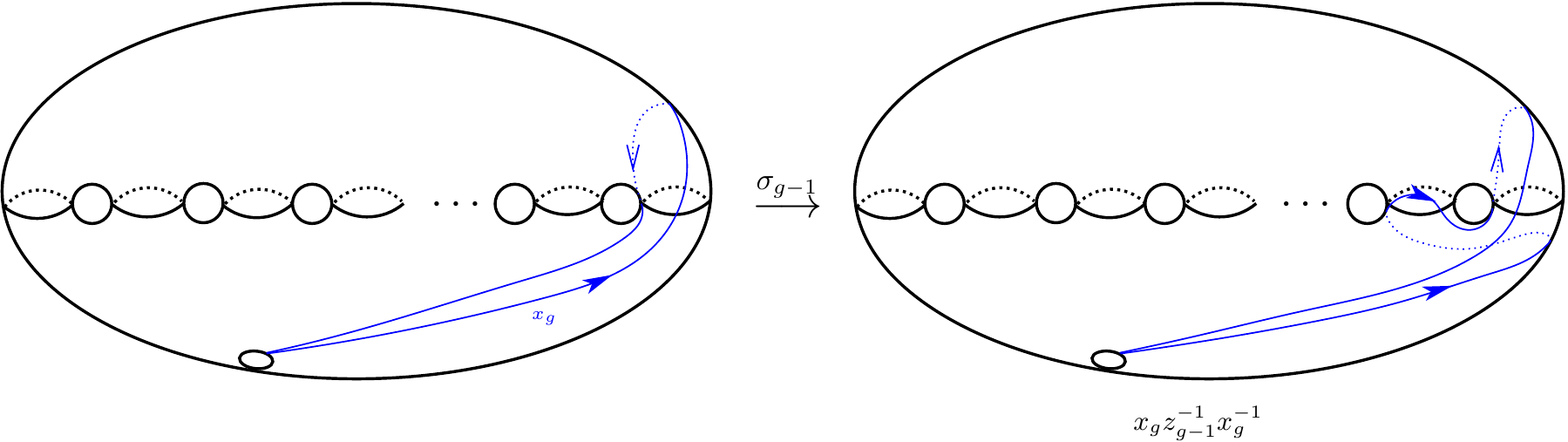}\\
  \caption{The action of ${\sigma}_{g-1}$ on $x_{g}$}\label{fig14}
\end{figure}
\begin{figure}[h!]
  \includegraphics[width= 0.75\textwidth]{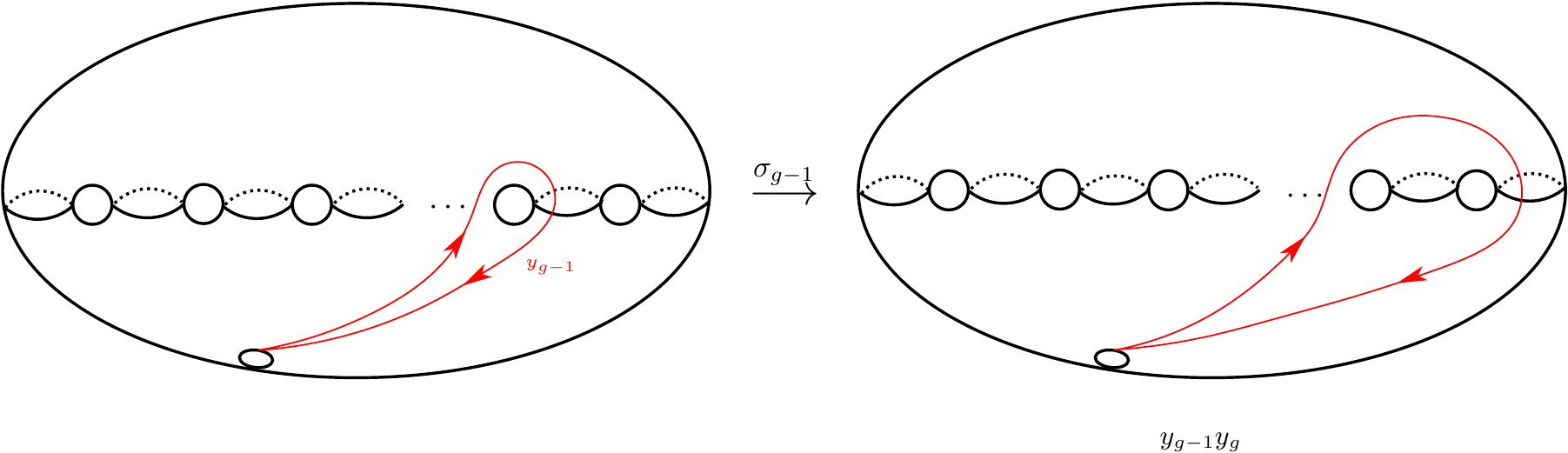}\\
  \caption{The action of ${\sigma}_{g-1}$ on $y_{g-1}$}\label{fig15}
\end{figure}
\begin{figure}[h!]
  \includegraphics[width= 0.75\textwidth]{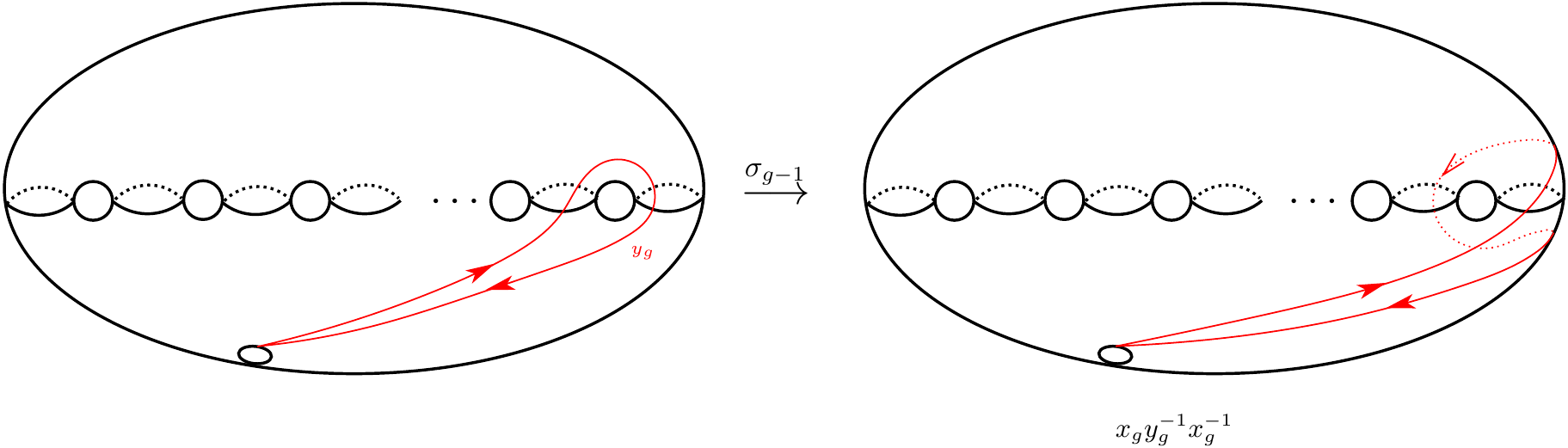}\\
  \caption{The action of ${\sigma}_{g-1}$ on $y_{g}$}\label{fig16}
\end{figure}
\end{proof}

 From Theorem 2.1 we immediately get the following result which will play a key role in the proof of Theorem 4.1.

\begin{corollary} In $\pi_1 S_{g,1}$, let $z_i = x_i^{-1}y_{i+1}x_{i+1}y_{i+1}^{-1}$ for $1 \le i \le g-1$, and let $z_g=x_g^{-1}$. Then $\pi_1 S_{g,1}$ is a free group on generators $y_1, y_2, \ldots, y_g, z_1, z_2,\ldots, z_g$, and each pillar switching $\sigma_i$ acts on $\pi_1 S_{g,1}=F_{\{y_1, y_2, \ldots, y_g, z_1, z_2,\ldots, z_g\}}$ as follows$:$
\begin{align*}
{\sigma}_{i-1}:
y_{i-1}&\longmapsto y_{i-1}y_i\\
y_i&\longmapsto z_i^{-1}y_i^{-1}z_i\\
y_{i+1}&\longmapsto z_i^{-1}y_iz_iy_{i+1}\\
z_{i-1}&\longmapsto z_i\\
z_i&\longmapsto z_i^{-1}z_{i-1}z_i,
\\
{\sigma}_{g-1}:
y_{g-1}&\longmapsto y_{g-1}y_g\\
y_g&\longmapsto z_g^{-1}y_g^{-1}z_g\\
z_{g-1}&\longmapsto z_g\\
z_g&\longmapsto z_g^{-1}z_{g-1}z_g.
\end{align*}
Here, $2\le i \le g-1$ and these automorphisms fix the generators that do not appear in the list.
\end{corollary}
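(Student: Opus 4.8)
The plan is to read the statement as two separate claims and dispatch each using only Theorem~2.1 (the pillar-switching actions) and the definition of the $z_i$. First I would establish that $\{y_1,\dots,y_g,z_1,\dots,z_g\}$ is a free basis of $\pi_1 S_{g,1}$, and only afterwards verify the transformation rules. For the basis claim I would invert the substitution. The relation $z_g=x_g^{-1}$ gives $x_g=z_g^{-1}$, and for $1\le i\le g-1$ the defining relation $z_i=x_i^{-1}y_{i+1}x_{i+1}y_{i+1}^{-1}$ rearranges to $x_i=y_{i+1}x_{i+1}y_{i+1}^{-1}z_i^{-1}$, expressing $x_i$ as a word in $x_{i+1}$, $y_{i+1}$ and $z_i$. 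A downward induction on $i$ then writes every $x_i$ as a word in the $y$'s and $z$'s, so $\{y_1,\dots,y_g,z_1,\dots,z_g\}$ generates the free group $\pi_1 S_{g,1}=F(x_1,y_1,\dots,x_g,y_g)$. Since this is a free group of rank $2g$ and we have produced a generating set of exactly $2g$ elements, the standard fact that finitely generated free groups are Hopfian (so any surjective endomorphism is an isomorphism) upgrades ``generating'' to ``free basis,'' which is the first assertion.

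Next I would verify the action formulas. The images of the $y_j$ are immediate: Theorem~2.1 already records $\sigma_{i-1}(y_{i-1})=y_{i-1}y_i$, $\sigma_{i-1}(y_i)=z_i^{-1}y_i^{-1}z_i$, and $\sigma_{i-1}(y_{i+1})=z_i^{-1}y_iz_iy_{i+1}$ as words in the $y$'s and $z$'s, so nothing remains for those. The substantive content is to compute $\sigma_{i-1}(z_{i-1})$ and $\sigma_{i-1}(z_i)$ (and likewise $\sigma_{g-1}(z_{g-1})$, $\sigma_{g-1}(z_g)$) by applying the homomorphism $\sigma_{i-1}$ to the defining words $z_{i-1}=x_{i-1}^{-1}y_ix_iy_i^{-1}$ and $z_i=x_i^{-1}y_{i+1}x_{i+1}y_{i+1}^{-1}$, substituting the images from Theorem~2.1 (noting that $x_{i+1}$ and $x_{i-2}$ are fixed), and reducing. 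The refolding identities $z_iy_{i+1}=x_i^{-1}y_{i+1}x_{i+1}$ and $y_{i+1}x_{i+1}y_{i+1}^{-1}=x_iz_i$ are what collapse the long reduced words back into the $z$'s; the outcome is $\sigma_{i-1}(z_{i-1})=z_i$ and $\sigma_{i-1}(z_i)=z_i^{-1}z_{i-1}z_i$. For the terminal switching the computation is shorter because $z_g=x_g^{-1}$: here $\sigma_{g-1}(z_g)=\sigma_{g-1}(x_g)^{-1}=(x_gz_{g-1}^{-1}x_g^{-1})^{-1}=x_gz_{g-1}x_g^{-1}=z_g^{-1}z_{g-1}z_g$, and a parallel reduction yields $\sigma_{g-1}(z_{g-1})=z_g$.

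Finally I would confirm that $\sigma_{i-1}$ fixes every $z_j$ with $j\notin\{i-1,i\}$, so that the list is complete. For $j\ge i+1$ none of the letters $x_j,y_{j+1},x_{j+1},y_{j+1}$ in $z_j$ is moved by $\sigma_{i-1}$ (which disturbs only indices $i-1,i,i+1$, and $y_{i+1}$ enters only $z_i$); for $j=i-2$ the moved letters $y_{i-1}\mapsto y_{i-1}y_i$ and $x_{i-1}\mapsto y_i^{-1}x_{i-1}y_i$ recombine so that the conjugating $y_i$'s cancel inside $x_{i-2}^{-1}y_{i-1}x_{i-1}y_{i-1}^{-1}$, leaving $z_{i-2}$ unchanged. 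Since all resulting images are words in $\{y_1,\dots,y_g,z_1,\dots,z_g\}$, the formulas are stated entirely in the new basis, completing the argument. The only real obstacle is bookkeeping: recognizing, after cancellation, when a long reduced word in the $x_j,y_j$ equals $z_i^{-1}z_{i-1}z_i$, which hinges on applying the refolding identities at the right moment; the basis claim itself is routine once the Hopfian argument is invoked.
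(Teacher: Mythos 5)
Your proposal is correct and follows essentially the same route as the paper: the paper states the corollary as an immediate consequence of Theorem~2.1, obtained by rewriting the actions in the generators $y_1,\dots,y_g,z_1,\dots,z_g$, which is exactly what you verify (your computations of $\sigma_{i-1}(z_{i-1})=z_i$, $\sigma_{i-1}(z_i)=z_i^{-1}z_{i-1}z_i$, and the fixed generators all check out). The only difference is one of detail: you explicitly justify the free-basis claim via the downward-induction substitution and the Hopfian property, and carry out the cancellations the paper leaves to the reader.
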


As elements of $\Gamma_{g,1}$, pillar switchings should be able to be expressed in terms of products of Dehn twists. We now give these expressions which is of interest in its own right.

\begin{theorem} We have the following equalities$:$
\begin{itemize}
\item[$(1)$] ${\sigma}_0=a_2^{-1}w_1a_1b_1w_1a_1b_1=a_2^{-1}(w_1a_1b_1)^2;$
\item[$(2)$] ${\sigma}_{i-1}=a_{i+1}^{-1}a_ib_iw_iw_{i-1}a_{i-1}^{-1}b_ia_i$ \ \ for $2 \le i \le g-1;$
\item[$(3)$] ${\sigma}_{g-1}=w_{g-1}a_gb_gw_{g-1}a_gb_ga_{g-1}^{-1}=(w_{g-1}a_gb_g)^2a_{g-1}^{-1}$.
\end{itemize}
\end{theorem}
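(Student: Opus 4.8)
The plan is to use the fact, recalled at the start of this section, that the representation $\Gamma_{g,1}\hookrightarrow\mathrm{Aut}(\pi_1 S_{g,1})$ is faithful: a mapping class is uniquely determined by the automorphism it induces on the free group $F=\langle x_1,y_1,\ldots,x_g,y_g\rangle$, subject to fixing the relator $R$. Both sides of each displayed identity are elements of $\Gamma_{g,1}$, so to establish the three equalities it suffices to check that the two sides induce the same automorphism of $\pi_1 S_{g,1}$. For the left-hand sides the induced automorphisms are already tabulated in Theorem 2.1, so the entire task reduces to computing the automorphism induced by the right-hand product of Dehn twists and matching it, generator by generator, with the corresponding entry of Theorem 2.1. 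The two alternative forms given in $(1)$ and $(3)$ (for example $a_2^{-1}w_1a_1b_1w_1a_1b_1$ versus $a_2^{-1}(w_1a_1b_1)^2$) are identical as words and require no separate argument.

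For the computation I would fix once and for all the convention under which $\Gamma_{g,1}\to\mathrm{Aut}(\pi_1 S_{g,1})$ is a homomorphism, so that in a product the rightmost Dehn twist acts first, and then evaluate each composite by repeatedly substituting the single-twist formulas of Lemma 2.1 and simplifying with $z_i=x_i^{-1}y_{i+1}x_{i+1}y_{i+1}^{-1}$. A useful observation is that each product in the statement involves only Dehn twists whose indices lie in a small window around $i$, so every generator $x_j,y_j$ with $j$ outside $\{i-1,i,i+1\}$ is fixed throughout; this matches the final clause of the theorem and restricts the verification to the few generators that actually move. Concretely, in $(2)$ one applies $a_i,b_i,a_{i-1}^{-1},w_{i-1},w_i,b_i,a_i,a_{i+1}^{-1}$ in turn to each of $x_{i-1},x_i,y_{i-1},y_i,y_{i+1}$ and checks that the reduced words agree with the images listed under $\sigma_{i-1}$ in Theorem 2.1.

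The only real difficulty is the bookkeeping. The middle twists $w_{i-1},w_i$ introduce the conjugating factors $z_{i-1},z_i$, and carrying these through the later twists produces long words that must be reduced to the compact conjugated forms $z_i^{-1}y_i^{-1}z_i$, $z_i^{-1}y_iz_iy_{i+1}$, and so on; this word manipulation, rather than any conceptual point, is the main obstacle. I would organize each case as a table whose rows record the intermediate image after each twist, which makes the cancellations transparent and localizes any arithmetic slip. As independent checks I would verify that the resulting automorphism fixes the boundary relator $R=[y_1,x_1]\cdots[y_g,x_g]$ --- automatic, since the right-hand side is a product of elements of $\Gamma_{g,1}$, but a stringent test of the computation --- and that it fixes every untouched generator exactly as predicted. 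With these safeguards the three cases become routine, and what remains is the length and delicacy of the substitutions.
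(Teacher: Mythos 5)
Your proposal is correct and follows essentially the same route as the paper: the paper's proof likewise invokes the faithfulness of $\Gamma_{g,1}\hookrightarrow\mathrm{Aut}(\pi_1 S_{g,1})$ and then verifies each identity by applying the single-twist formulas of Lemma 2.1 to the right-hand products (rightmost twist first, in exactly the order you list) and matching the resulting reduced words, generator by generator, against Theorem 2.1. The only cosmetic difference is that the paper also tracks the images of the auxiliary elements $z_{i-1},z_i$, which is redundant given the action on the $x_j,y_j$ but convenient for Corollary 2.1.
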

\begin{proof}

Each element in $\Gamma_{g,1}$ is uniquely determined by the action on $\pi_1 S_{g,1}$ which is a free group on $y_1, x_1, \ldots , y_g, x_g$. We  will prove the equalities of this theorem by comparing the actions of the right-hand side of the equalities on $\pi_1 S_{g,1}$ with those given in Theorem 2.1.

\begin{align*}
(1)\\
x_1&\stackrel{b_1}{\longmapsto}x_1y_1 \stackrel{a_1}{\longmapsto}x_1y_1x_1^{-1} \stackrel{w_1}{\longmapsto}z_1^{-1}y_2x_2y_2^{-1}y_1z_1y_2x_2^{-1}y_2^{-1}z_1=z_1^{-1}y_2x_2y_2^{-1}y_1x_1^{-1}z_1\\
&\stackrel{b_1}{\longmapsto}z_1^{-1}y_1y_2x_2y_2^{-1}x_1^{-1}y_1^{-1}z_1 \stackrel{a_1}{\longmapsto}z_1^{-1}y_1z_1y_1^{-1}z_1 \stackrel{w_1}{\longmapsto}z_1^{-1}y_1z_1y_1^{-1}z_1\\& \stackrel{a_2^{-1}}{\longmapsto}z_1^{-1}y_1z_1y_1^{-1}z_1;\\
y_1&\stackrel{b_1}{\longmapsto}y_1 \stackrel{a_1}{\longmapsto}y_1x_1^{-1} \stackrel{w_1}{\longmapsto}y_1z_1y_2x_2^{-1}y_2^{-1}z_1 \stackrel{b_1}{\longmapsto}z_1y_2x_2^{-1}y_2^{-1}y_1^{-1}z_1 \stackrel{a_1}{\longmapsto}y_1^{-1}z_1 \\
&\stackrel{w_1}{\longmapsto}z_1^{-1}y_1^{-1}z_1 \stackrel{a_2^{-1}}{\longmapsto}z_1^{-1}y_1^{-1}z_1;\\
y_2&\stackrel{b_1}{\longmapsto}y_2 \stackrel{a_1}{\longmapsto}y_2 \stackrel{w_1}{\longmapsto}z_1^{-1}y_2 \stackrel{b_1}{\longmapsto}z_1^{-1}y_1y_2 \stackrel{a_1}{\longmapsto}z_1^{-1}y_1x_1^{-1}y_2\\
&\stackrel{w_1}{\longmapsto}z_1^{-1}y_1z_1y_2x_2^{-1}y_2^{-1}z_1z_1^{-1}y_2=z_1^{-1}y_1z_1y_2x_2^{-1}\\ &\stackrel{a_2^{-1}}{\longmapsto}z_1^{-1}y_1z_1y_2x_2x_2^{-1}=z_1^{-1}y_1z_1y_2;\\
z_1&\stackrel{b_1}{\longmapsto}y_1^{-1}z_1\stackrel{a_1}{\longmapsto}x_1y_1^{-1}z_1\stackrel{w_1}{\longmapsto}z_1^{-1}y_2x_2y_2^{-1}z_1^{-1}y_1^{-1}z_1\\
&\stackrel{b_1}{\longmapsto}z_1^{-1}y_1y_2x_2y_2^{-1}z_1^{-1}y_1y_1^{-1}y_1^{-1}z_1=z_1^{-1}y_1y_2x_2y_2^{-1}z_1^{-1}y_1^{-1}z_1\\
&\stackrel{a_1}{\longmapsto}z_1^{-1}y_1x_1^{-1}y_2x_2y_2^{-1}z_1^{-1}x_1y_1^{-1}z_1\\
&\stackrel{w_1}{\longmapsto}z_1^{-1}y_1z_1y_2x_2^{-1}y_2^{-1}z_1z_1^{-1}y_2x_2y_2^{-1}z_1z_1^{-1}z_1^{-1}y_2x_2y_2^{-1}z_1^{-1}y_1^{-1}z_1\\
&\qquad=z_1^{-1}y_1y_2x_2y_2^{-1}z_1^{-1}y_1^{-1}z_1\\
&\stackrel{a_2^{-1}}{\longmapsto}z_1^{-1}y_1x_1y_1^{-1}z_1;
\end{align*}

\begin{align*}
(2)\\
x_{i-1}&\stackrel{a_i}{\longmapsto}x_{i-1}\stackrel{b_i}{\longmapsto}x_{i-1}\stackrel{a_{i-1}^{-1}}{\longmapsto}x_{i-1}\stackrel{w_{i-1}}{\longmapsto}z_{i-1}^{-1}y_ix_iy_i^{-1}\\
&\stackrel{w_i}{\longmapsto}z_{i-1}^{-1}y_iz_iz_i^{-1}y_{i+1}x_{i+1}y_{i+1}^{-1}z_i^{-1}y_i^{-1}=z_{i-1}^{-1}y_iy_{i+1}x_{i+1}y_{i+1}^{-1}z_i^{-1}y_i^{-1}\\
&\stackrel{b_i}{\longmapsto}y_i^{-1}z_{i-1}^{-1}y_iy_{i+1}x_{i+1}y_{i+1}^{-1}z_i^{-1}y_iy_i^{-1}=y_i^{-1}z_{i-1}^{-1}y_iy_{i+1}x_{i+1}y_{i+1}^{-1}z_i^{-1}\\
&\stackrel{a_i}{\longmapsto}x_iy_i^{-1}z_{i-1}^{-1}y_ix_i^{-1}y_{i+1}x_{i+1}y_{i+1}^{-1}z_i^{-1}=y_i^{-1}x_{i-1}y_i\\
&\stackrel{a_{i+1}^{-1}}{\longmapsto}y_i^{-1}x_{i-1}y_i;\\
x_i&\stackrel{a_i}{\longmapsto}x_i\stackrel{b_i}{\longmapsto}x_iy_i\stackrel{a_{i-1}^{-1}}{\longmapsto}x_iy_i\stackrel{w_{i-1}}{\longmapsto}x_iz_{i-1}^{-1}y_i\stackrel{w_i}{\longmapsto}z_i^{-1}y_{i+1}x_{i+1}y_{i+1}^{-1}z_{i-1}^{-1}y_iz_i\\
&\stackrel{b_i}{\longmapsto}z_i^{-1}y_iy_{i+1}x_{i+1}y_{i+1}^{-1}y_i^{-1}z_{i-1}^{-1}y_iy_i^{-1}z_i=z_i^{-1}y_iy_{i+1}x_{i+1}y_{i+1}^{-1}y_i^{-1}z_{i-1}^{-1}z_i\\
&\stackrel{a_i}{\longmapsto}z_i^{-1}y_ix_i^{-1}y_{i+1}x_{i+1}y_{i+1}^{-1}x_iy_i^{-1}z_{i-1}^{-1}z_i=z_i^{-1}y_iz_iy_i^{-1}x_{i-1}z_i\\
&\stackrel{a_{i+1}^{-1}}{\longmapsto}z_i^{-1}y_iz_iy_i^{-1}x_{i-1}z_i;\\
y_{i-1}&\stackrel{a_i}{\longmapsto}y_{i-1}\stackrel{b_i}{\longmapsto}y_{i-1}\stackrel{a_{i-1}^{-1}}{\longmapsto}y_{i-1}x_{i-1}\stackrel{w_{i-1}}{\longmapsto}y_{i-1}z_{i-1}z_{i-1}^{-1}y_ix_iy_i^{-1}=y_{i-1}y_ix_iy_i^{-1}\\
&\stackrel{w_i}{\longmapsto}y_{i-1}y_iz_iz_i^{-1}y_{i+1}x_{i+1}y_{i+1}^{-1}z_i^{-1}y_i^{-1}=y_{i-1}y_iy_{i+1}x_{i+1}y_{i+1}^{-1}z_i^{-1}y_i^{-1}\\
&\stackrel{b_i}{\longmapsto}y_{i-1}y_iy_{i+1}x_{i+1}y_{i+1}^{-1}z_i^{-1}y_iy_i^{-1}=y_{i-1}y_iy_{i+1}x_{i+1}y_{i+1}^{-1}z_i^{-1}\\
&\stackrel{a_i}{\longmapsto}y_{i-1}y_ix_i^{-1}y_{i+1}x_{i+1}y_{i+1}^{-1}z_i^{-1}=y_{i-1}y_i\stackrel{a_{i+1}^{-1}}{\longmapsto}y_{i-1}y_i;\\
y_i&\stackrel{a_i}{\longmapsto}y_ix_i^{-1}\stackrel{b_i}{\longmapsto}y_iy_i^{-1}x_i^{-1}=x_i^{-1}\stackrel{a_{i-1}^{-1}}{\longmapsto}x_i^{-1}\stackrel{w_{i-1}}{\longmapsto}x_i^{-1}\\
&\stackrel{w_i}{\longmapsto}y_{i+1}x_{i+1}^{-1}y_{i+1}^{-1}z_i\stackrel{b_i}{\longmapsto}y_{i+1}x_{i+1}^{-1}y_{i+1}^{-1}y_i^{-1}z_i\\
&\stackrel{a_i}{\longmapsto}y_{i+1}x_{i+1}^{-1}y_{i+1}^{-1}x_iy_i^{-1}z_i=z_i^{-1}y_i^{-1}z_i\\
&\stackrel{a_{i+1}^{-1}}{\longmapsto}z_i^{-1}y_i^{-1}z_i;\\
y_{i+1}&\stackrel{a_i}{\longmapsto}y_{i+1}\stackrel{b_i}{\longmapsto}y_{i+1}\stackrel{a_{i-1}^{-1}}{\longmapsto}y_{i+1}\stackrel{w_{i-1}}{\longmapsto}y_{i+1}\stackrel{w_i}{\longmapsto}z_i^{-1}y_{i+1}\stackrel{b_i}{\longmapsto}z_i^{-1}y_iy_{i+1}\\
&\stackrel{a_i}{\longmapsto}z_i^{-1}y_ix_i^{-1}y_{i+1}\stackrel{a_{i+1}^{-1}}{\longmapsto}z_i^{-1}y_ix_i^{-1}y_{i+1}x_{i+1}=z_i^{-1}y_iz_iy_{i+1};\\
z_{i-1}&\stackrel{a_i}{\longmapsto}z_{i-1}\stackrel{b_i}{\longmapsto}z_{i-1}y_i\stackrel{a_{i-1}^{-1}}{\longmapsto}z_{i-1}y_i\stackrel{w_{i-1}}{\longmapsto}z_{i-1}z_{i-1}^{-1}y_i=y_i\stackrel{w_i}{\longmapsto}y_iz_i\stackrel{b_i}{\longmapsto}y_iy_i^{-1}z_i=z_i\\
&\stackrel{a_i}{\longmapsto}z_i\stackrel{a_{i+1}^{-1}}{\longmapsto}z_i;\\
z_i&\stackrel{a_i}{\longmapsto}z_i\stackrel{b_i}{\longmapsto}y_i^{-1}z_i\stackrel{a_{i-1}^{-1}}{\longmapsto}y_i^{-1}z_i\stackrel{w_{i-1}}{\longmapsto}y_i^{-1}z_{i-1}z_i\stackrel{w_i}{\longmapsto}z_i^{-1}y_i^{-1}z_{i-1}z_i\\
&\stackrel{b_i}{\longmapsto}z_i^{-1}y_iy_i^{-1}z_{i-1}y_iy_i^{-1}z_i=z_i^{-1}z_{i-1}z_i\stackrel{a_i}{\longmapsto}z_i^{-1}z_{i-1}z_i\stackrel{a_{i+1}^{-1}}{\longmapsto}z_i^{-1}z_{i-1}z_i;
\end{align*}

\begin{align*}
(3)\\
x_{g-1}&\stackrel{a_{g-1}^{-1}}{\longmapsto}x_{g-1}\stackrel{b_g}{\longmapsto}x_{g-1}\stackrel{a_g}{\longmapsto}x_{g-1}\stackrel{w_{g-1}}{\longmapsto}z_{g-1}^{-1}y_gx_gy_g^{-1}\\
&\stackrel{b_g}{\longmapsto}y_g^{-1}z_{g-1}^{-1}y_gx_gy_gy_g^{-1}=y_g^{-1}z_{g-1}^{-1}y_gx_g\\
&\stackrel{a_g}{\longmapsto}x_gy_g^{-1}z_{g-1}^{-1}y_gx_g^{-1}x_g=y_g^{-1}x_{g-1}y_g\\
&\stackrel{w_{g-1}}{\longmapsto}y_g^{-1}z_{g-1}z_{g-1}^{-1}y_gx_gy_g^{-1}z_{g-1}^{-1}y_g=x_gy_g^{-1}z_{g-1}^{-1}y_g=y_g^{-1}x_{g-1}y_g;\\
x_g&\stackrel{a_{g-1}^{-1}}{\longmapsto}x_g\stackrel{b_g}{\longmapsto}x_gy_g\stackrel{a_g}{\longmapsto}x_gy_gx_g^{-1}\stackrel{w_{g-1}}{\longmapsto}x_gz_{g-1}^{-1}y_gx_g^{-1}\\
&\stackrel{b_g}{\longmapsto}x_gy_gy_g^{-1}z_{g-1}^{-1}y_gy_g^{-1}x_g^{-1}=x_gz_{g-1}^{-1}x_g^{-1}\\
&\stackrel{a_g}{\longmapsto}x_gz_{g-1}^{-1}x_g^{-1}\stackrel{w_{g-1}}{\longmapsto}x_gz_{g-1}^{-1}x_g^{-1};\\
y_{g-1}&\stackrel{a_{g-1}^{-1}}{\longmapsto}y_{g-1}x_{g-1}\stackrel{b_g}{\longmapsto}y_{g-1}x_{g-1}\stackrel{a_g}{\longmapsto}y_{g-1}x_{g-1}\\
&\stackrel{w_{g-1}}{\longmapsto}y_{g-1}z_{g-1}z_{g-1}^{-1}y_gx_gy_g^{-1}=y_{g-1}y_gx_gy_g^{-1}\\
&\stackrel{b_g}{\longmapsto}y_{g-1}y_gx_gy_gy_g^{-1}=y_{g-1}y_gx_g\stackrel{a_g}{\longmapsto}y_{g-1}y_gx_g^{-1}x_g=y_{g-1}y_g\\
&\stackrel{w_{g-1}}{\longmapsto}y_{g-1}z_{g-1}z_{g-1}^{-1}y_g=y_{g-1}y_g;\\
y_g&\stackrel{a_{g-1}^{-1}}{\longmapsto}y_g\stackrel{b_g}{\longmapsto}y_g\stackrel{a_g}{\longmapsto}y_gx_g^{-1}\stackrel{w_{g-1}}{\longmapsto}z_{g-1}^{-1}y_gx_g^{-1}\\
&\stackrel{b_g}{\longmapsto}y_g^{-1}z_{g-1}^{-1}y_gy_g^{-1}x_g^{-1}=y_g^{-1}z_{g-1}^{-1}x_g^{-1}\\
&\stackrel{a_g}{\longmapsto}x_gy_g^{-1}z_{g-1}^{-1}x_g^{-1}\stackrel{w_{g-1}}{\longmapsto}x_gy_g^{-1}z_{g-1}z_{g-1}^{-1}x_g^{-1}=x_gy_g^{-1}x_g^{-1};\\
z_{g-1}&\stackrel{a_{g-1}^{-1}}{\longmapsto}z_{g-1}\stackrel{b_g}{\longmapsto}z_{g-1}y_g\stackrel{a_g}{\longmapsto}z_{g-1}y_gx_g^{-1}\stackrel{w_{g-1}}{\longmapsto}z_{g-1}z_{g-1}^{-1}y_gx_g^{-1}=y_gx_g^{-1}\\
&\stackrel{b_g}{\longmapsto}y_gy_g^{-1}x_g^{-1}=x_g^{-1}\stackrel{a_g}{\longmapsto}x_g^{-1}\stackrel{w_{g-1}}{\longmapsto}x_g^{-1}.
\end{align*}
\end{proof}

\section{2-categories and their classifying spaces}
We recall definitions and facts of monoidal categories and 2-categories (see \cite{BC03,Hoyo} for more details).

\begin{definition}
A {\it monoidal category} $(\mathcal C, \otimes, I,\alpha, \lambda, \rho)$ is a category $\mathcal C$ equipped with a bifunctor $\otimes:\mathcal C\times\mathcal C\rightarrow \mathcal C$ (called tensor product or monoidal product), an object $I\in\mathcal C$ (called unit object or identity object), and isomorphisms
\begin{align*}
\alpha_{A,B,C}&:(A\otimes B)\otimes C\rightarrow A\otimes(B\otimes C),\\
\lambda_A&:I\otimes A\rightarrow A,\\
\rho_A&:A\otimes I\rightarrow A
\end{align*}
functorial in $A,B,C\in\mathcal C$ (called associator, left and right unitor, respectively) satisfying the following conditions:
\begin{itemize}
\item[(1)] $(A\otimes\alpha_{B,C,D})\alpha_{A,B\otimes C,D}(\alpha_{A,B,C}\otimes D)=\alpha_{A,B,C\otimes D}\alpha_{A\otimes B,C,D}$,
\item[(2)] $(A\otimes\lambda_B)\alpha_{A,I,B}=\rho_A\otimes B$.
\end{itemize}
\end{definition}

A monoidal category is strict if associator, left and right unitor are identities.

\begin{definition}
A {\it $2$-category} consists of:
\begin{itemize}
\item[(1)] A collection of objects ;
\item[(2)] For every pair of objects $A,B\in\mathcal C$, $\mathcal C(A,B)$ form a category: its objects $f,g:A\rightarrow B$ (called {\it $1$-morphisms}) and its morphism $\alpha:f\Rightarrow g$ (called {\it $2$-morphism}), whose composition is denoted by $\circ_v$ and called vertical composition;
\item[(3)] For every object $A\in\mathcal C$, an identity $1$-morphism ${\rm id}_A\in\mathcal C(A,A)$;
\item[(4)] For every triple objects $A,B,C\in\mathcal C$, a composition functor
$$\circ_h:\mathcal C(B,C)\times  \mathcal C(A,B)\rightarrow \mathcal C(A,C),$$
called horizontal composition;
\end{itemize}
These data must satisfy the usual unital and associative axioms.
\end{definition}

Each hom-set in a $2$-category carries a structure of a category. So a $2$-category is a category enriched over $\bf Cat$ (the category of small categories).

For composable $2$-morphisms $\alpha,\beta,\gamma,\delta$, we have an interchange law (See Fig. \ref{fig17}):
$$(\alpha\circ_h\beta)\circ_v(\gamma\circ_h\delta)=(\alpha\circ_v\gamma)\circ_h(\beta\circ_v\delta)$$
\begin{figure}[ht]
  % Requires \usepackage{graphicx}
  \includegraphics[width=0.13\textwidth,angle=-90]{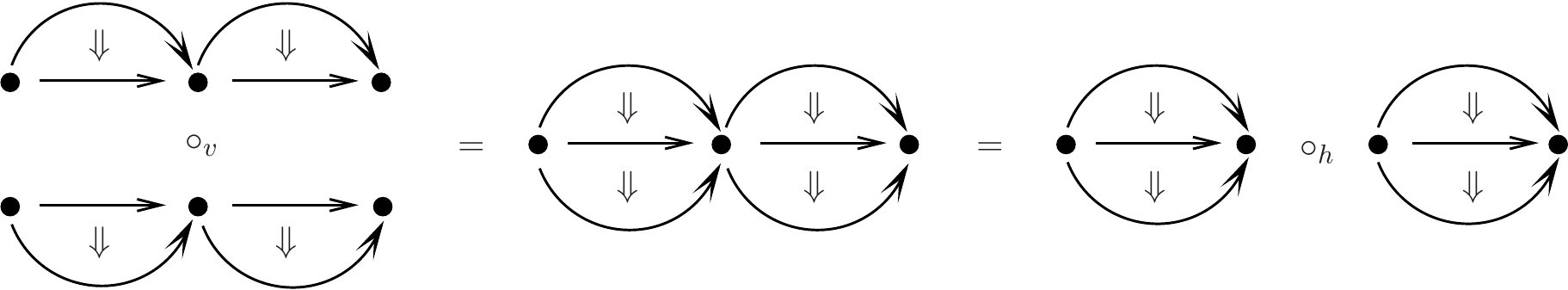} \\
  \caption{}\label{fig17}
\end{figure}

Every (small) category $\mathcal C$ can be regarded as a $2$-category whose only $2$-morphisms are identities. Every (small strict) monoidal category $(\mathcal C,\otimes)$ can be regarded as a $2$-category with a single object, one $1$-morphism for each object of $\mathcal C$, one $2$-morphism for each $1$-morphism of $\mathcal C$, and horizontal composition given by $\otimes$.

A 2-functor $F:\mathcal C\rightarrow\mathcal D$ between 2-categories consists of a map $F:\mathcal C\rightarrow\mathcal D$ together with functors $F:\mathcal C(A,B)\rightarrow\mathcal D(F(A),F(B))$ such that all the structure is preserved.

The {\it nerve} $N\mathcal C$ of a small category $\mathcal C$ is the simplicial set whose $n$-simplices are chains:
$A_0\rightarrow A_1\rightarrow\cdots\rightarrow A_n$
of $n$ composable arrows in $\mathcal C$. Degeneracies in $N\mathcal C$ insert an identity, the first and last faces drop an arrow, and the other faces compose two consecutive ones. The {\it classifying space} $B\mathcal C=|N\mathcal C|$ is the geometric realization of the nerve. It is a CW-complex with one 0-cell for each objects of $\mathcal C$, one 1-cell for each arrow, one 2-cell for each commutative triangle, and so on.
These constructions are functorial so that we have functors
$N:{\bf Cat}\rightarrow{\bf Sim}$, $ B:{\bf Cat}\rightarrow{\bf Top}$,
where {\bf Cat}, {\bf Sim} and {\bf Top} are the categories of small categories, simplicial sets and topological spaces, respectively.

\begin{definition}
Given a 2-category $\mathcal C$, the {\it nerve} $\underline{N}\mathcal C$ is the simplicial category defined by
$$\underline{N}C_n=\coprod_{A_0,\ldots,A_n\in\mathcal C}\mathcal C(A_0,A_1)\times\cdots\times\mathcal C(A_{n-1},A_n).$$
%The face and degeneracy functors are defined in the standard way.
By applying the nerve functor {\bf Cat}$\rightarrow${\bf Sim} in each degree we get a bisimplicial set, which we call the 2-nerve.

The classifying space $B_2\mathcal C$ of $\mathcal C$ is the geometric realization of this bisimplicial set.
\end{definition}

For a 2-category $\mathcal C$, we may apply  nerve construction to the morphism categories. Since the nerve construction commutes with products, we have a category  $\mathcal B \mathcal C$ enriched over $\Delta$-sets, the category of simplicial sets. In other word, $\mathcal B \mathcal C$ is a simplicial category with constant objects. Applying the nerve construction to $\mathcal B \mathcal C$ yields the bisimplicial set which is same as that given in the above definition. The geometric realization of this, denoted by $B_2\mathcal C$. is the classifying space of $\mathcal C$. We have
$$B_2\mathcal C = B \mathcal B \mathcal C.$$

We will simply write $B_2 \mathcal C = B\mathcal C$ if there is no confusion. Note that if a category $\mathcal C$ is regarded as a 2-category, then its classifying space as a 2-category is homeomorphic to the usual $B\mathcal C$. If $\mathcal M$ is a monoidal category, then its classifying space $B\mathcal M$ as a 2-category is the classifying space of the topological monoid $B\mathcal M$. Given a monoidal category $\mathcal M$, we denote by $\overline{\mathcal M}$ its associated 2-category.

\begin{theorem}[\cite{Hoyo}]
Let $(\mathcal M,\otimes)$ be a monoidal category. If $\pi_0(B\mathcal M)$ is a group with product induced by $\otimes$, then
$$B\mathcal M\simeq \Omega B_2\overline{\mathcal M}.$$
\end{theorem}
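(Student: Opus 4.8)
The plan is to reduce the statement to the classical recognition principle for grouplike topological monoids, after identifying $B_2\overline{\mathcal M}$ with the classifying space of the topological monoid $B\mathcal M$. First I would unwind the structure of $\overline{\mathcal M}$: it is a $2$-category with a single object $*$, whose unique hom-category $\overline{\mathcal M}(*,*)$ is $\mathcal M$ itself and whose horizontal composition is the tensor product $\otimes$. Consequently, in the nerve $\underline N\overline{\mathcal M}$ the degree-$p$ piece is
\[
\underline N\overline{\mathcal M}_p=\overline{\mathcal M}(*,*)^{\times p}=\mathcal M^{\times p},
\]
with the outer face maps dropping a factor and the inner ones composing consecutive factors via $\otimes$. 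In other words, $\underline N\overline{\mathcal M}$ is exactly the bar construction on the monoidal category $\mathcal M$, internal to the category of small categories.

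Next I would realize the resulting bisimplicial set in two stages. Applying the nerve $N$ degreewise and using that $N$ preserves products gives, in bidegree $(p,q)$, the set $N_q(\mathcal M^{\times p})=(N_q\mathcal M)^{\times p}$. Realizing first in the $q$-direction, and invoking the fact that geometric realization commutes with finite products (working in a convenient category of spaces), yields the simplicial space
\[
[p]\longmapsto (B\mathcal M)^{\times p},
\]
whose face maps are induced by the multiplication $B\mathcal M\times B\mathcal M\to B\mathcal M$ coming from $\otimes$. This is precisely the bar construction of the topological monoid $(B\mathcal M,\otimes)$, so realizing in the remaining $p$-direction produces a homeomorphism $B_2\overline{\mathcal M}\cong B(B\mathcal M)$ onto the classifying space of the topological monoid $B\mathcal M$, as already indicated in the discussion preceding the theorem.

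It then remains to invoke the classical delooping theorem: for a topological monoid $M$ the canonical map $M\to\Omega BM$ is a weak homotopy equivalence provided $M$ is grouplike, that is $\pi_0 M$ is a group under the induced product. The hypothesis of the theorem is exactly that $\pi_0(B\mathcal M)$ is a group with product induced by $\otimes$, so $M=B\mathcal M$ is grouplike, and the theorem yields
\[
B\mathcal M\simeq\Omega B(B\mathcal M)=\Omega B_2\overline{\mathcal M}.
\]

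The formal part of the argument, namely the identification $B_2\overline{\mathcal M}\cong B(B\mathcal M)$, is essentially bookkeeping about bisimplicial realization; the only genuine subtlety there is the requirement that realization commute with products, which forces one to work in compactly generated spaces. The real content, and hence the main obstacle, is the group-completion/recognition step $M\simeq\Omega BM$ for grouplike $M$: this is the classical theorem one cites rather than reproves, and it is precisely where the hypothesis on $\pi_0$ enters, since without grouplikeness the map $B\mathcal M\to\Omega B_2\overline{\mathcal M}$ only exhibits a group completion rather than an equivalence.
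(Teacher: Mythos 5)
Your proposal is correct and follows essentially the route the paper itself indicates: the paper states no proof (it cites del Hoyo), but the discussion immediately preceding the theorem already records your key identification of $B_2\overline{\mathcal M}$ with the classifying space of the topological monoid $B\mathcal M$, after which the statement is exactly the classical recognition principle $M\simeq\Omega BM$ for grouplike, well-pointed monoids. The one point worth flagging is that your bar-construction identification (the simplicial identities for face maps given by $\otimes$) needs $\mathcal M$ to be \emph{strict} monoidal, which is precisely how the paper defines $\overline{\mathcal M}$ and is satisfied by the tile and surface categories it applies this to; for a general monoidal category one would first invoke coherence/strictification, since a monoidal equivalence induces a homotopy equivalence of classifying spaces.
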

\vskip 0.2cm
In the next section, for a monoidal 2-category $\mathcal C$, the group completion of its classifying space is denoted by $\Omega B \mathcal C.$

\section{Acyclic embedding and categorical delooping}\label{sec:Braid-Relation}

The map $$\psi:B_g\rightarrow\Gamma_{g,1}$$
which maps the obvious generators $\beta_1, \ldots, \beta_{g-1}$ of $B_g$ to the pillar switchings $\sigma_1, \ldots , \sigma_{g-1}$ is induced by extending the obvious embedding $B_g \hookrightarrow \Gamma_{0,(g+1),1}$ to $B_g \rightarrow \Gamma_{g,2}.$

According to Theorem 2.2, the map $\psi:B_g\rightarrow\Gamma_{g,1}$ may be expressed in terms of Dehn twists as follows:
\begin{align*}
\psi(\beta_i)&=a_{i+2}^{-1} a_{i+1} b_{i+1} w_{i+1} w_{i} a_{i}^{-1}b_{i+1} a_{i+1},\ \  i=1,\ldots,g-2,\\
\psi(\beta_{g-1})&=(w_{g-1}a_gb_g)^2a_{g-1}^{-1}.\\
\end{align*}

We show that $\phi$ embeds $B_g$ into $\Gamma_{g,1}.$

\begin{theorem}
$\psi:B_g\rightarrow\Gamma_{g,1}$ is a monomorphism.
\end{theorem}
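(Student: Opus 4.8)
The plan is to exploit the remarkably clean action of the pillar switchings on the $z$-coordinates recorded in the corollary to Theorem~2.1, and to identify that action with Artin's faithful representation of the braid group on a free group. The point is that although $\psi$ takes values in a complicated automorphism group of $\pi_1 S_{g,1}$, the induced action on a carefully chosen free factor is exactly the classical Artin action, whose faithfulness is known.

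First I would write $\pi_1 S_{g,1}$ as the free group on $y_1,\dots,y_g,z_1,\dots,z_g$ and single out the subgroup $Z=\langle z_1,\dots,z_g\rangle$, which is free of rank $g$ since the $z_i$ form part of a free basis. The key structural fact, read directly off the corollary to Theorem~2.1, is that each $\sigma_i$ carries every $z_k$ to a word in the $z$'s alone: concretely $\sigma_j$ acts by $z_j\mapsto z_{j+1}$, $z_{j+1}\mapsto z_{j+1}^{-1}z_j z_{j+1}$, and fixes the remaining generators, with the terminal case $\sigma_{g-1}$ handled using $z_g=x_g^{-1}$. Hence $Z$ is $\sigma_i$-invariant, and because the displayed formulas already define an automorphism of the free group on the $z$'s, the restriction $\sigma_i|_Z$ lies in $\operatorname{Aut}(Z)$ (it is onto $Z$, not merely into it).

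Next I would package this into a homomorphism. Restriction to an invariant subgroup is functorial, so $w\mapsto \psi(w)|_Z$ defines a homomorphism $\rho:B_g\to\operatorname{Aut}(Z)$, and under the identification $Z\cong F_g$ sending $z_i$ to the $i$-th standard generator, $\rho(\beta_i)$ is precisely the automorphism $z_i\mapsto z_{i+1}$, $z_{i+1}\mapsto z_{i+1}^{-1}z_i z_{i+1}$, fixing the others. This is one of the standard forms of the Artin representation $B_g\hookrightarrow\operatorname{Aut}(F_g)$; indeed each $\rho(\beta_i)$ sends every generator to a conjugate of a generator and fixes the total product $z_1\cdots z_g$, the characterizing features of a braid automorphism. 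I would not re-verify the braid relations for $\rho$: they hold automatically, since $\psi=I\circ\phi'$ is already a well-defined homomorphism and restriction preserves composition. Invoking Artin's classical theorem that this representation is faithful, and observing that $\rho$ factors as $B_g\xrightarrow{\psi}\Gamma_{g,1}\to\operatorname{Aut}(Z)$ with the second arrow the restriction to $Z$, injectivity of $\rho$ forces $\ker\psi=1$, so $\psi$ is a monomorphism.

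The main obstacle is entirely bookkeeping rather than conceptual: one must convert the raw formulas of the corollary into the precise statement that $Z$ is a genuine $\sigma_i$-invariant free factor on which the action is an \emph{automorphism} matching the standard Artin convention. In particular I would take care with the index shift between the corollary's $\sigma_{i-1}$ and $\psi(\beta_i)=\sigma_i$, and with the boundary behaviour at $\sigma_{g-1}$, where $z_g$ must play the role of the terminal free generator. Once this identification is in place, the faithfulness of $\psi$ is immediate from Artin's theorem.
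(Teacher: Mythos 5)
Your proposal is correct and takes essentially the same route as the paper's own proof: both restrict the action of the pillar switchings to the free factor $\langle z_1,\dots,z_g\rangle$ supplied by Corollary~2.1, observe that $\psi(\beta_i)=\sigma_i$ then acts exactly as Artin's generator $z_i\mapsto z_{i+1}$, $z_{i+1}\mapsto z_{i+1}^{-1}z_iz_{i+1}$, and deduce $\ker\psi\subseteq\ker\rho=\{1\}$ from the faithfulness of the Artin representation. Your additional remarks (invariance of $Z$, preservation of the product $z_1\cdots z_g$, the index shift) are just careful bookkeeping of what the paper leaves implicit.
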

\begin{proof}
Artin (\cite{Artin1}) identified $B_n$ as a subgroup of ${\rm Aut}F_n$ as follows.  Artin's map
$$\varphi:B_n\rightarrow {\rm Aut}F_n$$
is defined by mapping $\beta_i$ to the automorphism
$$
\varphi(\beta_i):\left\{
\begin{array}{ll}
\alpha_i\rightarrow \alpha_{i+1} \\
\alpha_{i+1}\rightarrow \alpha_{i+1}^{-1}\alpha_i\alpha_{i+1} .
\end{array}\right.
$$

 Consider the subgroup $F_{\{z_1, \ldots , z_g \}}$ of $\pi_1S_{g,1}=F_{\{y_1, \ldots, z_g, z_1, \ldots , z_g \}}$. According to Corollary 2.1, each $\sigma_i$ acts on $F_{\{z_1, \ldots , z_g \}}$ as

 $$
\sigma_i:\left\{
\begin{array}{ll}
z_i\rightarrow z_{i+1} \\
z_{i+1}\rightarrow z_{i+1}^{-1}z_iz_{i+1} .
\end{array}\right.
$$

Hence the map $$\psi' : B_g \rightarrow {\rm Aut}F_{\{z_1, \ldots , z_g \}}, \quad \beta_i \mapsto \sigma_i\mid_{F_{\{z_1, \ldots , z_g \}}}$$
 is an Artin map, so is injective. If the kernel of the map $$\psi : B_g \rightarrow {\rm Aut}F_{\{y_1, \ldots, y_g, z_1, \ldots , z_g \}}, \quad \beta_i \mapsto \sigma_i$$
 is nontrivial, then so is the kernel of $\psi'$. Therefore, $\psi$ is injective.   \hfill\qed
\end{proof}

We are going to show that  the map $$\psi : B_g\rightarrow\Gamma_{g,1}, \quad \beta_i\mapsto\sigma_i$$ induces a trivial homology homomorphism in the stable range, in the integral coefficient.

\begin{theorem}
The homomorphism $\psi_*  : H_*(B_{\infty};\mathbb Z)\rightarrow H_*(\Gamma_{\infty,1};\mathbb Z)$ induced by $\psi$ is trivial.
\end{theorem}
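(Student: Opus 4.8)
The plan is to follow the strategy outlined in the introduction and realize $\psi$ as a map of double loop spaces, so that the triviality of $\psi_*$ becomes a purely structural consequence. Concretely, I would work with the mod 2 coefficients first. Recall from the introduction that $H_*(B_\infty;\mathbb Z/2)\cong\mathbb Z/2[x_1,x_2,\ldots]$ with $\deg x_i=2^i-1$ and $Q_1(x_i)=x_{i+1}$, where $Q_1$ is the first Araki--Kudo--Dyer--Lashof operation coming from the double loop space structure on $BB_\infty^+\simeq\Omega_0^2S^2$. The first key step is to upgrade $\psi\colon BB_\infty^+\to B\Gamma_{\infty,1}^+$ to a double loop map. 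If I can do this, then $\psi_*$ commutes with $Q_1$, and since $x_1\in H_1(B_\infty;\mathbb Z/2)$ maps into $H_1(\Gamma_{\infty,1};\mathbb Z/2)=0$ (as $\Gamma_{g,1}$ is perfect for $g\ge 3$), an induction on degree using $Q_1(x_i)=x_{i+1}$ and the fact that $\psi_*$ is a ring map forces $\psi_*(x_i)=0$ for every $i$, hence $\psi_*=0$ on all of $H_*(B_\infty;\mathbb Z/2)$.

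Having disposed of the mod 2 statement, I would next handle the remaining primes and the integral statement. The decisive input is that $H_*(\Gamma_{\infty,1};\mathbb Z)$ is known (work of Madsen--Weiss and the resolution of the Mumford conjecture), and in particular the relevant torsion is understood; the double loop structure likewise controls the odd-primary Dyer--Lashof operations, so the same Araki--Kudo--Dyer--Lashof argument, applied prime by prime, shows $\psi_*$ vanishes with $\mathbb Z/p$ coefficients for all $p$. Combined with a check that $\psi_*$ is zero rationally (which follows since $H_*(B_\infty;\mathbb Q)$ is concentrated in degree $0$, as $\Omega^2S^3$ is rationally trivial in positive degrees), the universal coefficient theorem together with the finite generation of the homology groups in the stable range upgrades vanishing mod every prime to vanishing integrally.

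The heart of the argument, and the main obstacle, is the construction of the double loop map itself, and this is where the categorical delooping enters. The idea is to lift $\psi$ to a monoidal $2$-functor $\Psi\colon\mathcal T\to\mathcal S$ from the (modified) tile category to the (modified) surface category. By Theorem~3.1 applied to each side, the classifying spaces $B_2\mathcal T$ and $B_2\mathcal S$ are deloopings: one gets $B\mathcal T\simeq\Omega B_2\mathcal T$ and $B\mathcal S\simeq\Omega B_2\mathcal S$, and since $\mathcal T$ and $\mathcal S$ are themselves braided monoidal, one obtains a second delooping, so that $\Omega B\mathcal T$ and $\Omega B\mathcal S$ carry double loop structures realizing $BB_\infty^+$ and $B\Gamma_{\infty,1}^+$ after group completion. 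A monoidal $2$-functor $\Psi$ preserves this entire tower of deloopings and therefore induces a double loop map on group completions; its effect on $\pi_0$ and on the generating morphisms is exactly $\psi$, by the explicit Dehn-twist formulas for $\psi(\beta_i)$ recorded before Theorem~4.1.

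Thus the bulk of the work is geometric and combinatorial rather than homological: I must define $\Psi$ on objects, $1$-morphisms, and $2$-morphisms, and verify that it respects horizontal and vertical composition, the interchange law, and the monoidal (braiding) structure. The subtle point is that the naive assignment sending a tile to a surface does not quite respect the monoidal structure on the nose, which is precisely why the introduction signals that both $\mathcal T$ and $\mathcal S$ must be modified and why ``some hard work'' is needed; the modifications are chosen so that gluing pairs of pants (the maps $I$ and $J$ of the introduction, realizing the surface-category composition) is strictly compatible with the pillar-switching images of the tile generators. Once $\Psi$ is shown to be a genuine monoidal $2$-functor, inducing a map of double loop spaces, the homological conclusion follows by the Dyer--Lashof argument above, completing the proof.
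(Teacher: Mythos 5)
Your overall strategy coincides with the paper's up to the last step: both you and the authors construct the monoidal 2-functor $\Psi\colon\mathcal T\to\mathcal S$ sending atomic tiles to atomic surfaces and Artin generators to pillar switchings, deduce that $\psi^+\colon BB_\infty^+\to B\Gamma_{\infty,1}^+$ is a double loop space map (the paper's Proposition 4.4, via the inclusion $\mathcal M\hookrightarrow\mathcal T$ and $\Omega_0 B\mathcal S\simeq B\Gamma_{\infty,1}^+$), and then exploit the double loop structure together with the vanishing of $H_1(\Gamma_{\infty,1})$. Where you diverge is the endgame, and that is where your argument has a genuine gap.

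The paper finishes by citing Lemma 5.3 of Song--Tillmann: \emph{every} double loop space map $BB_\infty^+\to B\Gamma_{\infty,1}^+$ is homotopically trivial. (The point is that $BB_\infty^+\simeq\Omega^2S^3$, so a double loop map out of it deloops to a map from $S^3$ into the double delooping of $\mathbb Z\times B\Gamma_{\infty,1}^+$, and such a map is classified by $\pi_1(\mathbb Z\times B\Gamma_{\infty,1}^+)=H_1(\Gamma_{\infty,1};\mathbb Z)=0$.) Null-homotopy kills integral homology in one stroke, with no coefficient bookkeeping. You instead argue prime by prime with Dyer--Lashof operations. Your mod 2 step is exactly the sketch in the paper's introduction and is fine; the odd-primary step is also salvageable, since all positive-degree generators of $H_*(\Omega^2S^3;\mathbb Z/p)$ are obtained from the degree-one class by operations, Bocksteins and products, so only $H_1(\Gamma_{\infty,1};\mathbb Z/p)=0$ is needed --- the appeal to Madsen--Weiss is superfluous. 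The gap is in the integral assembly: it is simply not true that a map of spaces vanishing on $H_*(-;\mathbb Z/p)$ for all $p$ and on $H_*(-;\mathbb Q)$ must vanish on $H_*(-;\mathbb Z)$. For instance, a self-map of a Moore space $M(\mathbb Z/p^2,n)$ inducing multiplication by $p$ on $H_n$ is zero on mod $p$ homology (both on the $\otimes$ and the $\mathrm{Tor}$ part) and rationally, yet nonzero integrally; neither the universal coefficient theorem nor finite generation rules this out. To close the gap you would need the additional fact that all torsion in $H_*(\Omega^2S^3;\mathbb Z)$ has exponent exactly $p$ (Cohen--Lada--May), so that every positive-degree integral class is an integral Bockstein image $\tilde\beta(y)$ and hence $\psi_*\tilde\beta(y)=\tilde\beta\psi_*(y)=0$; or, more simply, replace the whole prime-by-prime scheme with the delooping argument above, which is precisely what the cited Song--Tillmann lemma does.
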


The remaining part of this paper is the proof of this theorem. As we mentioned in the introduction, we use categorical delooping technique. We should construct a suitable monoidal 2-functor $\Psi:\mathcal T\rightarrow \mathcal S$ which extends $\psi$.

\begin{definition}
Tile category $\mathcal T$  is a strict monoidal 2-category which is a kind of cobordism category between intervals.  Its objects are natural numbers $n\ge 0$ and $n$ represents $n$ ordered copies of unit interval. Its 1-morphisms are generated by three atomic tiles $D:0\rightarrow 1$, $P:2\rightarrow 1$ and $F:1\rightarrow 1$ (See Fig.~\ref{atom-tile}).
\begin{figure}[ht]
  \includegraphics[width=0.45\textwidth]{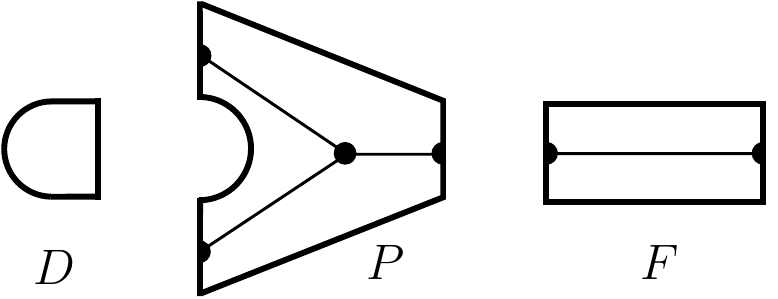}\\
  \caption{Atomic tiles of $\mathcal T$}\label{atom-tile}
\end{figure}
\end{definition}

1-morphisms are generated by gluing along incoming and outgoing intervals, and disjoint union. Note that all incoming and outgoing intervals are ordered. A typical 1-morphism is illustrated in Fig.~\ref{tile-1-morphism}. The composition of 1-morphisms is defined by gluing. Monoid structure is give by disjoint union. Two homeomorphic tiles are not identified in general.

\begin{figure}[ht]
  \includegraphics[width=0.5\textwidth]{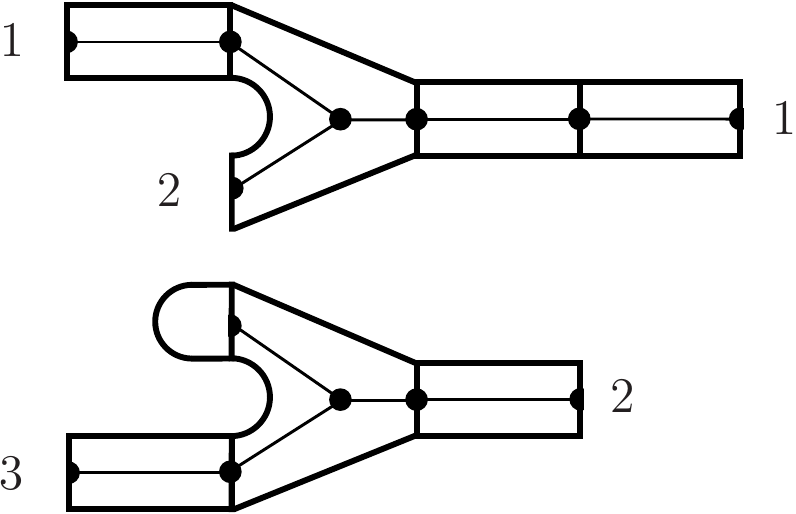}\\
  \caption{A 1-morphism from 3 to 2}\label{tile-1-morphism}
\end{figure}

A 1-morphism constructed by gluing of tiles and disjoint union also forms a disjoint union of trees. A 1-morphism in $\mathcal T(m,n)$ is a disjoint union of $n$ trees. Note that the atomic tiles $P$ and $F$ have half-vertices on the left and right boundary intervals. If a left half-vertex and a right half-vertex meet by gluing, then they form a {\it vertex} of a tree. Ignore half-vertices in a tree unless they are made into complete vertices by meeting their half-pairs.

The morphism categories $\mathcal T(m,n)$ are disjoint union of groups. There are no 2-morphisms between two distinct tiles, that is, $\mathcal T(T_1,T_2)=\varnothing$ if $T_1 \ne T_2$. Given a tile (1-morphism), marked points and edges on it form a graph $\Sigma_T$ as indicated in Fig.~\ref{tile-1-morphism}.

Any half edges in $\Sigma_T$, with half-vertices at the end, that are not completed through gluing are not considered as a part of $\Sigma_T$. For a tile $T$, $\mathcal T(T,T)$ is defined to be the Artin group corresponding to the graph $\Sigma_T$ which is generated by edges of $\Sigma_T$. For two edges $e,f$ in $\Sigma_T$, we have
\begin{align*}
ef&=fe\quad\ \, \text{ if } e\cap f =\varnothing,\\
efe&=fef\quad\text{ if } e\cap f \ne \varnothing.
\end{align*}

For example, let $F^k=F\circ F\circ \cdots \circ F$, then $\mathcal T(F^k,F^k)$ is the braid group $B_{k-2}$. The monoidal structure of $\mathcal T$ is induced by disjoint union.

We define an auxiliary category $\hat{\mathcal T}$ whose objects and 1-morphisms are same as in $\mathcal T$. The morphism categories $\hat{\mathcal T}(m,n)$ are groupoid. For tiles $T_1,T_2$ in $\hat{\mathcal T}(m,n)$, a 2-morphism is an isotopy class of homeomorphisms from $T_1$ to $T_2$ that identify the ordered incoming and outgoing boundary intervals, and map marked points (vertices) bijectively to marked points. Hence $\hat{\mathcal T}(T_1,T_2)=\varnothing$ if either $T_1$ and $T_2$ are not homeomorphic or have the same number of marked points. Note that for a tile $T$ with $k$ marked points is the mapping class group of $T$, that is,  $\hat{\mathcal T}(T,T)=\Gamma(T)=B_k$. There exists an obvious functor $\Theta:\mathcal T\rightarrow\hat{\mathcal T} $.
For a tile $T$, $\Theta$ induces a group homomorphism
$$ \Theta: A(\Sigma_T)\rightarrow \Gamma(T),$$
where $A(\Sigma_T)$ denotes the Artin group induced by graph $\Sigma_T$.

Both $\mathcal T$ and $\hat{\mathcal T}$ contain the subcategory $\mathcal M$ with one object, 1. 1-morphisms in $\mathcal M$ are generated by only one atomic tile $F$.
The classifying space of $\mathcal M$ is $\coprod_{n\ge 0}BB_{n}$. By applying the group completion theorem, we have

\begin{proposition}
$$\Omega B\mathcal M \simeq \mathbb Z\times BB_{\infty}^+.$$
Here $+$ means Quillen's plus construction.
\end{proposition}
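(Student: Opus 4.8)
The plan is to recognize $\mathcal M$ as the standard braid monoidal category and then feed it into the group-completion machine. Regard the single-object $2$-category $\mathcal M$ as a strict monoidal category: its objects are the iterated tiles $F^{k}$ $(k\ge 0)$, its morphisms are the automorphism $2$-morphisms $\mathcal M(F^{k},F^{k})$, which are braid groups, and its monoidal product is horizontal composition $F^{k}\otimes F^{l}=F^{k+l}$ given by gluing. Passing to nerves, the topological monoid $M:=B\mathcal M$ is the disjoint union $\coprod_{n\ge 0}BB_{n}$, whose Pontryagin product is the gluing pairing $BB_{m}\times BB_{n}\to BB_{m+n}$; on components it is the monoid $\pi_{0}(M)=\mathbb N$, free on the single generator $[F]$ represented by attaching one more $F$-tile. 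Since this product is not grouplike, del Hoyo's theorem of Section 3 does not apply directly; rather $\Omega B\mathcal M$ is by definition the group completion of $M$, that is $\Omega B(B\mathcal M)$.

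First I would apply the group-completion theorem of McDuff and Segal to $M$, obtaining
\[
H_{*}(\Omega B\mathcal M;\mathbb Z)\;\cong\;H_{*}(M;\mathbb Z)\bigl[\pi_{0}(M)^{-1}\bigr],
\]
the localization of the Pontryagin ring at $\pi_{0}(M)=\mathbb N$. Because this monoid is generated by the single class $[F]$, the localization is the mapping telescope of multiplication by $[F]$, which on homology is exactly the standard stabilization $H_{*}(BB_{n})\to H_{*}(BB_{n+1})$ adding one strand. Consequently $\pi_{0}(\Omega B\mathcal M)$ is the group completion $\mathbb Z$ of $\mathbb N$, and every path component of $\Omega B\mathcal M$ has the homology of the stable braid group $B_{\infty}=\mathrm{colim}_{n}B_{n}$.

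Next I would promote this homology computation to a homotopy equivalence. The canonical map $BB_{\infty}\to\Omega_{0}B\mathcal M$ into the basepoint component induces the homology isomorphism above; moreover $\Omega_{0}B\mathcal M$, being a component of a loop space, is a simple space, so $\pi_{1}$ acts trivially on its homology. Hence this map is a plus construction: taking the plus construction of $BB_{\infty}$ with respect to the maximal perfect subgroup of $B_{\infty}$, namely the commutator subgroup $[B_{\infty},B_{\infty}]$ (perfect as the filtered colimit of the groups $[B_{n},B_{n}]$, which are perfect for $n\ge 5$ by Gorin--Lin), yields $\Omega_{0}B\mathcal M\simeq BB_{\infty}^{+}$. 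Finally, since $\Omega B\mathcal M$ is grouplike, translation by any element is a homotopy equivalence interchanging path components, so all components are equivalent and
\[
\Omega B\mathcal M\;\simeq\;\pi_{0}(\Omega B\mathcal M)\times\Omega_{0}B\mathcal M\;\simeq\;\mathbb Z\times BB_{\infty}^{+}.
\]

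The step I expect to be the main obstacle is this last passage from homology to homotopy: one must verify that $[B_{\infty},B_{\infty}]$ is perfect so that the plus construction is defined, and that the loop-space structure forces $\pi_{1}$ to act trivially on homology, so that a homology equivalence from $BB_{\infty}$ into the simple space $\Omega_{0}B\mathcal M$ can be upgraded to a weak equivalence with $BB_{\infty}^{+}$. The further identification $BB_{\infty}^{+}\simeq\Omega_{0}^{2}S^{2}\simeq\Omega^{2}S^{3}$ quoted in the introduction is an additional input coming from the braided (hence $E_{2}$) structure and is not needed for the statement itself.
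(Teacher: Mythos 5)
Your proposal is correct and takes essentially the same route as the paper: the paper's entire proof consists of identifying $B\mathcal M$ with the topological monoid $\coprod_{n\ge 0}BB_n$ and invoking the group completion theorem, which is exactly the McDuff--Segal argument you carry out. Your write-up simply supplies the details the paper leaves implicit (localization at $\pi_0=\mathbb N$ via the stabilization telescope, perfectness of $[B_\infty,B_\infty]$, and the upgrade from homology equivalence to weak equivalence using simplicity of the loop-space components).
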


By applying the generated group completion theorem via homology fibrations we can determine the homotopy type of classifying space of $\hat{\mathcal T}$.

\begin{proposition}
$\Omega B\hat{\mathcal T}\simeq\mathbb Z\times BB_{\infty}^+$.
\end{proposition}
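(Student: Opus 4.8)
The plan is to reduce to the preceding proposition by showing that the inclusion $\iota:\mathcal M\hookrightarrow\hat{\mathcal T}$ of the one-object subcategory induces a homotopy equivalence of group completions $\Omega B\mathcal M\to\Omega B\hat{\mathcal T}$. Both $B\mathcal M$ and $B\hat{\mathcal T}$ are topological monoids under the disjoint-union monoidal structure, so the group completion theorem applies to each and identifies the homology of the group completion with the localization $H_*(-)[\pi_0^{-1}]$. It therefore suffices to show that $\iota$ is a homology isomorphism after inverting $\pi_0$, and then to promote this to a homotopy equivalence using that both group completions are double loop spaces, hence simple, so that a homology isomorphism between them is a weak equivalence.

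To run the computation I would first describe the hom-groupoids of $\hat{\mathcal T}$ explicitly: a tile $T$ with $k$ marked points has $\hat{\mathcal T}(T,T)=B_k$, and two tiles in $\hat{\mathcal T}(m,n)$ are joined by a $2$-morphism precisely when they are homeomorphic rel ordered boundary, so that the classifying space of each hom-groupoid is $\coprod_{[T]}BB_{k(T)}$, indexed by homeomorphism type. The subcategory $\mathcal M$ contributes exactly the tiles $F^{n}$ built from the single atomic tile $F$; the extra generating tiles $D$ and $P$ enlarge the set of $1$-morphisms only by the combinatorics of the underlying forests of trees. The core of the argument is the homology fibration / group completion computation on $\hat{\mathcal T}$: stabilizing a tile by disjoint union with a fixed tile raises the number of marked points, the induced maps on automorphism groups are the standard inclusions $B_k\hookrightarrow B_{k+1}$, and the group completion theorem then gives
$$H_*(\Omega_0 B\hat{\mathcal T})\cong\varinjlim_k H_*(BB_k)=H_*(BB_\infty),$$
while the grading by number of marked points makes $\pi_0$ of the monoid a copy of $\mathbb N$, group completing to $\mathbb Z$.

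It remains to invoke homological stability for the braid groups, exactly as in the preceding proposition where $BB_\infty^+\simeq\Omega_0^2S^2$: the stabilization maps are homology isomorphisms in a range growing to infinity, so the colimit above is attained and $\Omega_0 B\hat{\mathcal T}\simeq BB_\infty^+$. Together with $\pi_0\cong\mathbb Z$ this yields $\Omega B\hat{\mathcal T}\simeq\mathbb Z\times BB_\infty^+$, and since the same colimit is computed by the tiles $F^{n}$ lying in $\mathcal M$, the map induced by $\iota$ is the identity on $H_*(BB_\infty)$; hence $\iota$ is the desired equivalence.

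The step I expect to be the main obstacle is the verification of the homology fibration hypothesis for $\hat{\mathcal T}$ rather than for the honest braid monoid $\mathcal M$. One must check that the additional atomic tiles $D$ and $P$, together with the many extra objects $n$ that they connect, introduce no new stable homology and no new path components beyond the single braid grading. Concretely this amounts to showing that the forest of trees underlying a $1$-morphism is homotopically inessential (the relevant spaces of trees being contractible) and that stabilization by a fixed tile is cofinal among all tiles, so that the automorphism colimit is genuinely $B_\infty$ rather than something larger. Once these points are secured the identification with $\Omega B\mathcal M$, and hence with $\mathbb Z\times BB_\infty^+$, follows.
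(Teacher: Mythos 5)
Your overall strategy---group completion combined with homology fibrations, braid group homological stability as the essential input, and a comparison with the one-object subcategory $\mathcal M$---is the same route the paper takes: its entire proof is a one-sentence appeal to the ``generalized group completion theorem via homology fibrations'' in the style of Song--Tillmann. However, there is a genuine flaw in how you set up the group completion. You treat $B\hat{\mathcal T}$ as a topological monoid and invoke the McDuff--Segal theorem in the form $H_*(\Omega B(-))\cong H_*(-)[\pi_0^{-1}]$, with stabilization by disjoint union and with ``$\pi_0$ of the monoid a copy of $\mathbb N$.'' Neither claim holds. The 2-category $\hat{\mathcal T}$ has all natural numbers as objects, and any two objects are joined by 1-morphisms built from $D$ and $F$ (for instance $\mathrm{id}_n\sqcup D:n\to n+1$), so the classifying space $B\hat{\mathcal T}$ is path-connected; as a monoid it is therefore already group-like, its $\pi_0$ is trivial rather than $\mathbb N$, and localizing at $\pi_0$ is vacuous. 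In the paper's notation $\Omega B\hat{\mathcal T}$ is the based loop space of this connected space, which is not computed by the naive one-object monoid group completion you describe. If instead you meant the monoid $\coprod_{m,n}B\hat{\mathcal T}(m,n)$ of morphism spaces under disjoint union, its $\pi_0$ is the set of homeomorphism types of forests-with-marked-points, which is much larger than $\mathbb N$: the marked-point count alone does not classify components.

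What is actually required is the many-object (``generalized'') group completion theorem of Tillmann's papers: one stabilizes the morphism groupoids $\hat{\mathcal T}(n,1)$ by composition with the atomic tile $F:1\to 1$---it is gluing, not disjoint union, that fuses half-vertices into new marked points and induces the standard inclusions $B_k\hookrightarrow B_{k+1}$---forms the mapping telescope, and verifies that the relevant map over the category is a homology fibration. That verification is precisely where homological stability of braid groups enters, and it is what simultaneously yields $\Omega B\hat{\mathcal T}\simeq\mathbb Z\times BB_\infty^+$ and the statement that the inclusion of $\mathcal M$ induces an equivalence of group completions. You correctly anticipate that ``verification of the homology fibration hypothesis'' is the main obstacle, but the framework you set up (one-object monoid, disjoint-union stabilization, $\pi_0=\mathbb N$) is not one in which that verification can be carried out, so as written the proposal does not yet amount to a proof.
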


Surface category $\mathcal S$ (cf. \cite{Tillmann2}, \cite{Tillmann3}), which is slightly modified in this paper,  is a categorical delooping of $B\Gamma_{\infty,1}^+$. Its objects are natural numbers $n\ge 0$, and $n$ represents $n$ ordered disjoint circles. 1-morphisms are generated by three atomic surfaces; a disk $D':0\rightarrow 1$, a surface of genus 2 with two incoming and one outgoing boundary component $P':2\rightarrow 1$ and a torus with one incoming and one outgoing boundary component $F':1\rightarrow 1$.

\begin{figure}[ht]
  \includegraphics[width=0.45\textwidth]{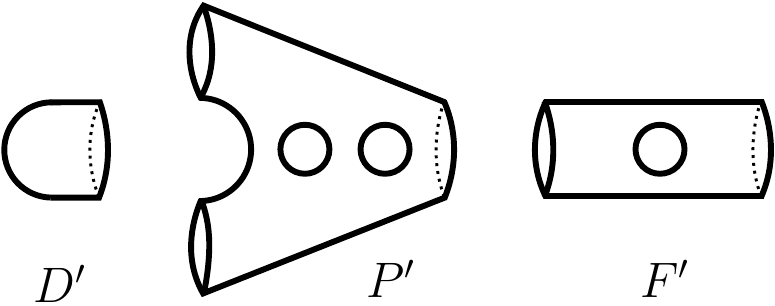}\\
  \caption{Atomic surfaces of $\mathcal S$}\label{atom-tile-surface}
\end{figure}

1-morphisms are generated by gluing incoming boundary circles of one surface to outgoing boundary circles of another surface, and by disjoint union. In addition reordering of the incoming and outgoing circles gives rise to new 1-morphisms. Identity 1-morphisms are adjoint and may be thought as zero length cylinders, A typical 1-morphism  of $\mathcal S$ is illustrated in Fig. \ref{atom-tile-surface-example}.

\begin{figure}[ht]
  \includegraphics[width=0.55\textwidth]{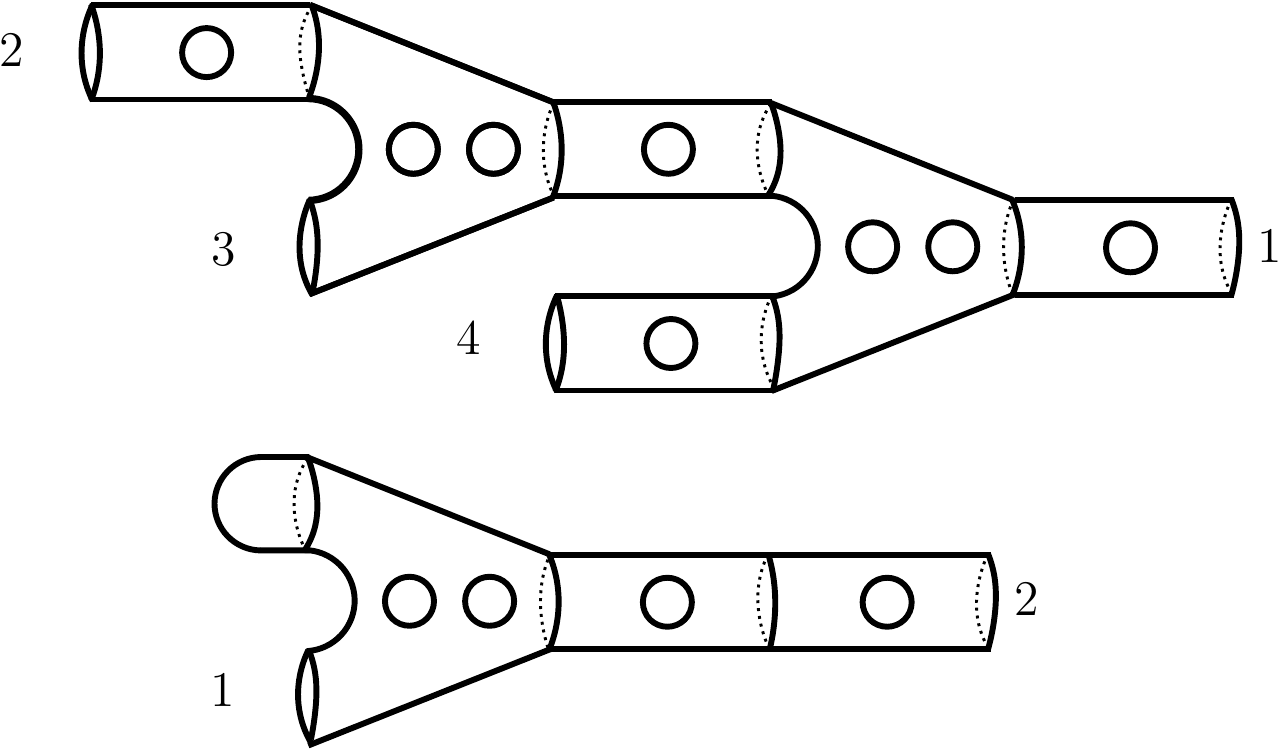}\\
  \caption{A typical 1-morphism of $\mathcal S$}\label{atom-tile-surface-example}
\end{figure}

The set of 2-morphisms in $\mathcal S(m,n)$ between two 1-morphisms $S_1, S_2$ is the set of isotopy classes of homeomorphism from $S_1$ to $S_2$ which identify the $m$ incoming and $n$ outgoing boundary circles. For a 1-morphism $S$, then $\mathcal S(S,S)$ is the mapping class group of $S$.

The disjoint union makes $\mathcal S$ a strict symmetric monoidal category, hence the group completion of its classifying space is an infinite loop space. Let $\mathcal M_{\Gamma}$ be the subcategory of $\mathcal S$ with only one object 1 whose 1-morphisms are generated by the atomic surface $F'$ only. The group completion argument and the homology stability theorem of mapping class groups proves the following.

\begin{proposition}[{\cite[Theorem~3.1]{Tillmann2}}]
$\Omega B\mathcal M_{\Gamma}\simeq \Omega B\mathcal S \simeq \mathbb Z\times B\Gamma_{\infty,1}^+$.
\end{proposition}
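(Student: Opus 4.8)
The statement consists of two homotopy equivalences; I would obtain the identification with $\mathbb{Z}\times B\Gamma_{\infty,1}^{+}$ by a group-completion computation and the identification of $\Omega B\mathcal M_{\Gamma}$ with $\Omega B\mathcal S$ by a homology-fibration comparison. First I would identify the topological monoid underlying $B\mathcal M_{\Gamma}$. Since $\mathcal M_{\Gamma}$ has the single object $1$ and its $1$-morphisms are the iterates $F'^{k}$ (with $F'^{0}$ the identity cylinder $S_{0,2}$), gluing along the shared boundary circle identifies $F'^{k}$ with the surface $S_{k,2}$ of genus $k$ carrying one incoming and one outgoing boundary circle. As in the tile category there are no $2$-morphisms between non-homeomorphic surfaces, so the endomorphisms are $\mathcal M_{\Gamma}(F'^{k},F'^{k})=\Gamma_{k,2}$ and the morphism category is the disjoint union of groups $\coprod_{k\ge 0}\Gamma_{k,2}$. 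Composition of $1$-morphisms makes its realization $M:=\coprod_{k\ge 0}B\Gamma_{k,2}$ a topological monoid whose product is induced by the genus-additive gluing maps $\Gamma_{k,2}\times\Gamma_{l,2}\to\Gamma_{k+l,2}$, so that $B\mathcal M_{\Gamma}=BM$ and $\pi_{0}M=\mathbb{N}$, generated by $[F']=[S_{1,2}]$.

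Next I would apply the group completion theorem to $M$. As $\pi_{0}M$ is generated by the single element $[F']$, localizing $H_{*}(M)$ at that element is computed by the mapping telescope of the stabilization maps, so the theorem yields $\Omega BM\simeq \mathbb{Z}\times B\Gamma_{\infty,2}^{+}$, where $\Gamma_{\infty,2}={\lim}_{k\to\infty}\Gamma_{k,2}$; the plus construction is legitimate since $\Gamma_{g,2}$ is perfect for $g>3$ (\cite{Powell}), and the stabilization by $S_{1,2}$ is the map $I\circ J$, which is a homology isomorphism in the stable range by Harer's theorem as recorded in the introduction. Capping off a boundary circle by a disk then gives a homology isomorphism in the stable range, so $B\Gamma_{\infty,2}^{+}\simeq B\Gamma_{\infty,1}^{+}$. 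This yields the equivalence $\Omega B\mathcal M_{\Gamma}\simeq \mathbb{Z}\times B\Gamma_{\infty,1}^{+}$.

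For the equivalence $\Omega B\mathcal M_{\Gamma}\simeq \Omega B\mathcal S$ induced by the inclusion $\iota:\mathcal M_{\Gamma}\hookrightarrow\mathcal S$, I would argue that $\Omega B\iota$ is an integral homology isomorphism, which suffices because the disjoint-union symmetric monoidal structure makes $\Omega B\mathcal S$ grouplike. The plan is the homology-fibration argument of \cite{Tillmann2}: the extra objects $n\ge 2$ and the generating morphisms $D'$ and $P'$ do not affect the stable homology, because after group completion every $1$-morphism becomes invertible and the genus contributed by gluing is absorbed into the same stabilization that already appears in the computation of $\Omega B\mathcal M_{\Gamma}$. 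Concretely I would realize the projection of a bar resolution of $\mathcal S$ onto $\mathcal M_{\Gamma}$ as a homology fibration whose fibers are homologically trivial in the stable range, using genus-additivity together with the independence of the stable homology from the number of boundary circles.

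The step I expect to be the main obstacle is this last comparison, specifically checking that the hypotheses of \cite{Tillmann2} survive the modification of $\mathcal S$ made in this paper. Since the atomic pair of pants $P'$ here carries genus $2$ rather than genus $0$, I must verify that the extra handles only shift the genus index and are swallowed by stabilization, so that the homology-fibration argument proceeds exactly as in \cite[Theorem~3.1]{Tillmann2}. The first equivalence, by contrast, is a routine consequence of the group completion theorem and Harer stability once the monoid $\coprod_{k}B\Gamma_{k,2}$ has been identified.
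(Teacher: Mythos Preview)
Your proposal is correct and follows exactly the route the paper indicates: the paper does not give a detailed proof but simply states that ``the group completion argument and the homology stability theorem of mapping class groups proves'' the result and cites \cite[Theorem~3.1]{Tillmann2}. Your write-up supplies the expected details---identifying $B\mathcal M_{\Gamma}$ with the monoid $\coprod_{k}B\Gamma_{k,2}$, applying group completion, invoking Harer stability to pass from $\Gamma_{\infty,2}$ to $\Gamma_{\infty,1}$, and deferring the comparison $\Omega B\mathcal M_{\Gamma}\simeq\Omega B\mathcal S$ to Tillmann's homology-fibration argument---and correctly flags that the only point needing care is that the modified genus-$2$ atomic surface $P'$ does not disturb that argument.
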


We now define the monoidal 2-functor $\Psi:\mathcal T\rightarrow \mathcal S$. $\Psi$ maps $n$ intervals to $n$ circles. For atomic tiles, $\Psi$ is defined by
$$\Psi(D)=D',~\Psi(P)=P',~\Psi(F)=F'.$$

\begin{figure}[ht]
  \includegraphics[width=0.4\textwidth]{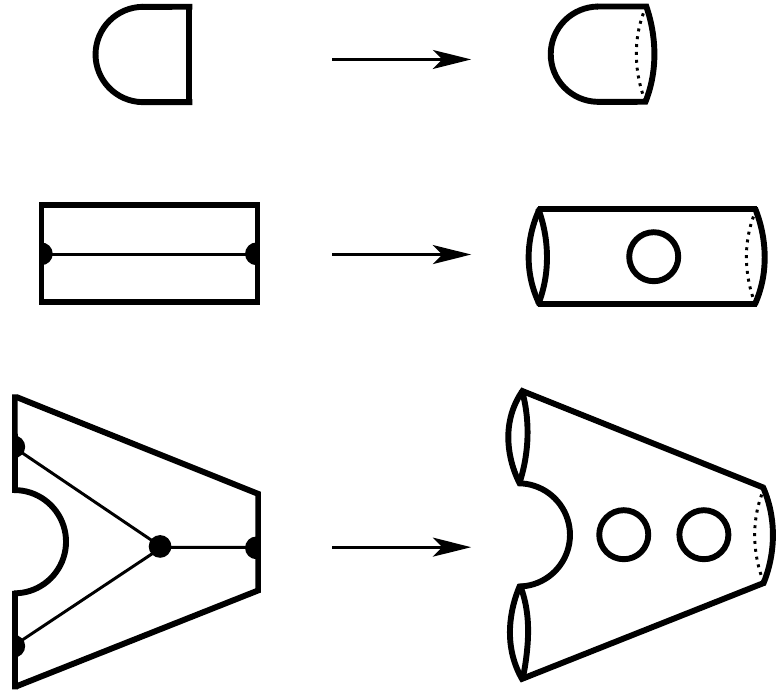}\\
  \caption{The functor $\Psi$ on atomic 1-morphisms }\label{Tile-functor-map}
\end{figure}

$\Psi$ extends to a map of all 1-morphism as 1-morphisms are build in $\mathcal T$ and $\mathcal S$ from atomic 1-morphisms in the same fashion. The assignment of $\Psi$ on 1-morphisms is totally geometric and obviously functorial.

\begin{figure}[ht]
  \includegraphics[width=0.5\textwidth]{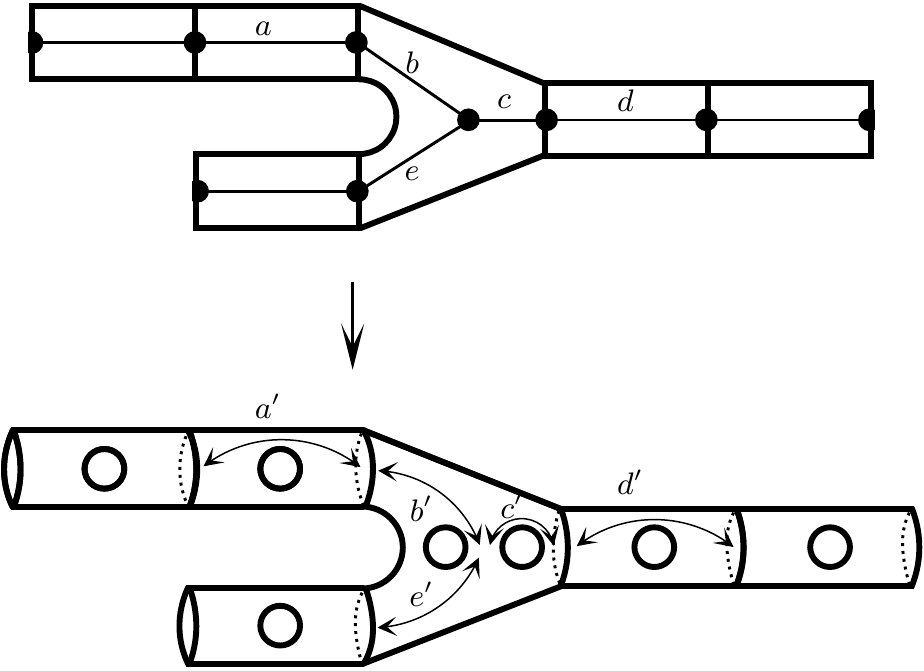}\\
  \caption{The functor $\Psi$ on a 2-morphism }\label{tile-surface-example}
\end{figure}

\begin{figure}[ht]
  \includegraphics[width=0.5\textwidth]{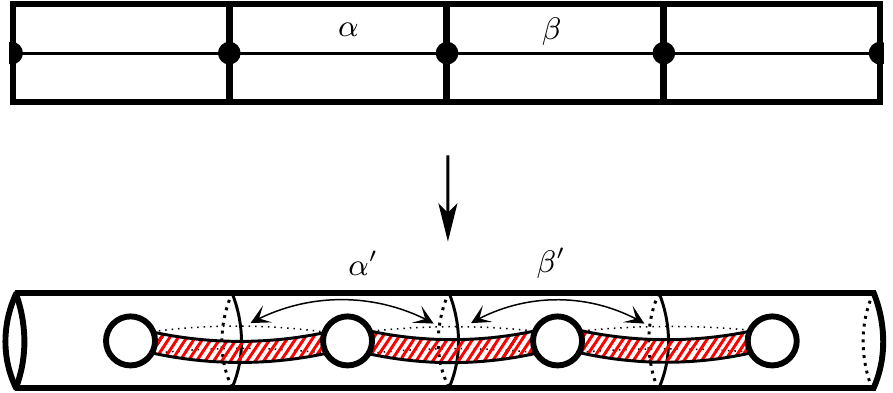}\\
  \caption{The functor $\Psi$ on a 2-morphism }\label{braid-pillar-map}
\end{figure}

To define the functor $\Psi$ on 2-morphisms, consider a graph $\Sigma_T$ corresponding to a tile $T$ and the Artin group $A(\Sigma_T)$ generated by edges of $\Sigma_T$. To each full edge (ignoring half edges) of $\Sigma_T$, assign a pillar switching as in Fig.~\ref{tile-surface-example} and \ref{braid-pillar-map}. We may think that to each vertex of $\Sigma_T$, $\Psi$ assigns a pillar of surface in $\mathcal S$, and ignore half-vertices. Then we may think that to each edge (generator of the group $A(\Sigma_T)$) $\Psi$ assigns a pillar switching.  The map of generators $e \mapsto \sigma$ extends to a well-defined group homomorphism
$$\Psi : \mathcal T (T, T) =A(\Sigma_T ) \rightarrow \mathcal S (\Psi(T), \Psi(T)) = \Gamma(\Psi(T)).$$

Then this assignment is functorial under gluing and $\Psi$ commutes with disjoint union, hence $\Psi$ is a strict monoidal 2-functor. Then we have
$$\Omega B\Psi:\Omega B\mathcal T\rightarrow\Omega B\mathcal S.$$
is a map of double loop spaces.

\begin{proposition}
The map $\psi^+:BB_{\infty}^+\rightarrow B\Gamma_{\infty,1}^+$ is a map of double loop spaces.
\end{proposition}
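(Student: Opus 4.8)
The plan is to extract $\psi^+$ from the double loop map $\Omega B\Psi\colon \Omega B\mathcal{T}\to\Omega B\mathcal{S}$ constructed above by restricting to identity components and identifying the restriction with $\psi^+$. Since group completion carries the monoidal $2$-functor $\Psi$ to a map of double loop spaces, everything reduces to naming the source and target and checking that the induced map on the braid/mapping-class-group factor is the one we want.

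First I would observe that $\Psi(F)=F'$, so $\Psi$ restricts to a monoidal functor $\Psi_0\colon\mathcal{M}\to\mathcal{M}_\Gamma$ of the distinguished one-object subcategories. On the hom-groups the graph $\Sigma_{F^k}$ is a path, so $\mathcal{M}(F^k,F^k)=A(\Sigma_{F^k})=B_{k-2}$ is already a braid group, and by the construction of $\Psi$ on $2$-morphisms each generating edge is sent to the corresponding pillar switching; thus $\Psi_0$ on hom-groups is exactly the homomorphism $\psi$ of Section~\ref{sec:Braid-Relation}. Next I would form the square of inclusions and functors
\[
\begin{array}{ccc}
\mathcal{M} & \hookrightarrow & \mathcal{T}\\
\Psi_0\big\downarrow\ \ \ & & \ \ \ \big\downarrow\Psi\\
\mathcal{M}_\Gamma & \hookrightarrow & \mathcal{S}
\end{array}
\]
and apply $\Omega B(-)$. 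The bottom inclusion induces the equivalence $\Omega B\mathcal{M}_\Gamma\simeq\Omega B\mathcal{S}\simeq\mathbb{Z}\times B\Gamma_{\infty,1}^+$ of the cited proposition, while the top inclusion, together with the auxiliary functor $\Theta\colon\mathcal{T}\to\hat{\mathcal{T}}$ and the computations $\Omega B\mathcal{M}\simeq\Omega B\hat{\mathcal{T}}\simeq\mathbb{Z}\times BB_\infty^+$, identifies $\Omega B\mathcal{T}\simeq\mathbb{Z}\times BB_\infty^+$. Since $\Psi$ preserves the count of $F$'s it is the identity on the $\mathbb{Z}=\pi_0$ factor, so $\Omega B\Psi$ sends the identity component to the identity component; transporting the restriction of $\Omega B\Psi$ along the two horizontal equivalences yields, by the computation of $\Psi_0$ above, exactly $\psi^+\colon BB_\infty^+\to B\Gamma_{\infty,1}^+$. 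As the restriction of a double loop map to the identity component is again a double loop map, this proves that $\psi^+$ is a map of double loop spaces.

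The hard part will be the compatibility encoded in this square: one must check that the horizontal equivalences respect the $E_2$-structures, and in particular that the identification $\Omega B\mathcal{T}\simeq\mathbb{Z}\times BB_\infty^+$ obtained through $\Theta$ is compatible with the braided monoidal (disjoint-union-with-reordering) structure, even though $\Theta\colon A(\Sigma_T)\to\Gamma(T)$ is not an isomorphism on individual hom-groups. Equivalently, the delicate point is confirming that the restricted double loop map is $\psi^+$ up to homotopy, rather than merely a map inducing the same homomorphism $\psi$; this is where the monoidal $2$-functoriality of $\Psi$, and not just its values on the hom-groups, is essential, since the $E_2$-structure lives on the full categories $\mathcal{T}$ and $\mathcal{S}$ and not on the one-object subcategories $\mathcal{M}$, $\mathcal{M}_\Gamma$.
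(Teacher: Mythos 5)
Your strategy is in essence the paper's own: extract $\psi^+$ from $\Omega B\Psi$ by restricting along the one-object subcategories $\mathcal M\subset\mathcal T$ and $\mathcal M_{\Gamma}\subset\mathcal S$, use the strictly commuting square $\Psi\circ J=\iota\circ\Psi_0$ to identify the result with $\psi^+$, and use the equivalence $\Omega_0 B\mathcal S\simeq\Omega_0 B\mathcal M_{\Gamma}=B\Gamma_{\infty,1}^+$ on the target side. The difference lies entirely in the one delicate point, which you have correctly isolated but left open. What you call ``the hard part'' --- that the identification of $\Omega_0 B\mathcal T$ with $BB_{\infty}^+$ coming from the braid side is compatible with the $E_2$-structures --- is precisely the step the paper does not re-prove: it invokes Theorem 4.1 of Song--Tillmann, which asserts that the inclusion $J:\mathcal M\hookrightarrow\mathcal T$ induces a map of double loop spaces $\Omega BJ:\Omega B\mathcal M\rightarrow\Omega B\mathcal T$, where $\Omega B\mathcal M=\mathbb Z\times BB_{\infty}^+$ carries its standard (braided monoidal) $E_2$-structure. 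With that input the proof is simply the composite
$$BB_{\infty}^+=\Omega_0 B\mathcal M\stackrel{\Omega BJ}{\longrightarrow}\Omega_0 B\mathcal T\stackrel{\Omega B\Psi}{\longrightarrow}\Omega_0 B\mathcal S\simeq\Omega_0 B\mathcal M_{\Gamma}=B\Gamma_{\infty,1}^+,$$
each factor a double loop map, and the commutativity of your square is what says this composite is $\psi^+$. Without that input (or a reproof of it), your argument is incomplete at exactly the step you flagged.

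Two further points about your formulation. First, you do not need the top horizontal map to be an \emph{equivalence} respecting $E_2$-structures, only a double loop map; ``transport along equivalences'' can be replaced by plain composition, which is why the paper's version is shorter. Second, your proposed route to identifying $\Omega B\mathcal T$ ``through $\Theta$'' and $\hat{\mathcal T}$ cannot be carried out naively: as you yourself observe, $\Theta:A(\Sigma_T)\rightarrow\Gamma(T)$ is not an isomorphism on hom-groups, and Proposition 4.2 ($\Omega B\hat{\mathcal T}\simeq\mathbb Z\times BB_{\infty}^+$) is obtained by a group-completion argument that does not by itself yield an $E_2$-compatible comparison with $\Omega B\mathcal T$. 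Establishing that compatibility is the hard technical content of Song--Tillmann's paper; the clean way to finish your proof is to cite their Theorem 4.1 for $\Omega BJ$, after which your argument coincides with the paper's.
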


\begin{proof}
By Theorem 4.1 of \cite{Song-Tillmann} the inclusion functor $J: \mathcal M \hookrightarrow \mathcal T$ induces a double loop space map $\Omega BJ : \Omega B\mathcal M \rightarrow \Omega B \mathcal T$. Let $\Omega_0 B \mathcal M$ and $\Omega_0 B \mathcal T$ denote base point components of $\Omega B \mathcal M$ and $\Omega B \mathcal T$, respectively. The map $\psi^+:BB_{\infty}^+\rightarrow B\Gamma_{\infty,1}^+$ is regarded as the composite

$$BB_{\infty}^+ = \Omega_0 B \mathcal M \stackrel{\Omega BJ}{\longrightarrow} \Omega_0 B \mathcal T \stackrel{\Omega B\Psi}{\longrightarrow}\Omega_0 B\mathcal S \simeq \Omega_0 B \mathcal M_{\Gamma} = B\Gamma_{\infty}^+.$$
Hence it is a map  of double loop spaces.
\end{proof}

Since every double loop space map $BB_{\infty}^+\rightarrow B\Gamma_{\infty,1}^+$  is homotopically trivial (\cite{Song-Tillmann}, Lemma 5.3) and the plus construction does not change the homology, Theorem 4.2 has been proved.

\begin{remark}
By the homology stability theorem we have the following unstable version:  $$\psi_*:H_i(B_g;\mathbb Z) \rightarrow H_i(\Gamma_{g,1};\mathbb Z)$$ is zero for $0<i< 2g/3$.
\end{remark}

\begin{acknowledgements}
This research was supported by Basic Science Research Program through the National Research Foundation of Korea (NRF) funded by the Ministry of Education, Science and Technology (2011-0004509).
\end{acknowledgements}


\begin{thebibliography}{99}

\bibitem{Artin1}  Artin, E.:  Theorie der Z\"opfe, Abh. Math. Sem. Hambur. Univ. {\bf 4} (1926), 47--72.

%\bibitem{BT} C.-F. B\"odigheimer and U. Tillmann, {\it Stripping and splitting decorated mapping class groups}, Cohomological methods in homotopy theory (Bellaterra, 1998), 47--57, Progr. Math., 196, Birkhauser, Basel, 2001.

\bibitem{Boldsen} Boldsen, S.K.: Improved homological stability for the mapping class group with integral or twisted coefficients. ArXiv:0904.3269 (2009)

\bibitem{BC03} Bullejos, M., Cegarra, A.M.: On the geometry of 2-categories and their classifing spaces. K-Theory {\bf 29}, 211--229 (2003)

\bibitem{CohenLM} Cohen, F.R., Lada, T.J., May, J.P.: The homology of iterated loop space. Lecture Notes in Mathematics 533, Springer-Verlag, 1976.

\bibitem{Hoyo} del Hoyo, Matias L.: On the loop space of a 2-category. ArXiv: 1005.1300v2 (2011)

%\bibitem{Galatius} S. Galatius, {\it Stable homology of automorphism groups of free groups}, Ann. of Math. (2) {\bf 173} (2011), no. 2, 705--768.

\bibitem{Harer1} J. Harer, J.: Stability of the homology of the mapping class groups of orientable surfaces. Ann. of Math. (2) {\bf 121}, 215--249 (1985)

%\bibitem{Hatcher-Vogtman1} A. Hatcher and K. Vogtmann, {\it Cerf theory for graphs}, J. London Math. Soc. (2) {\bf 58} (1998), no. 3, 633--655.

%\bibitem{Hatcher-Vogtman2} A. Hatcher and K. Vogtmann, {\it Homology stability for outer automorphism groups of free groups}, Algebr. Geom. Topol. {\bf 4} (2004), 1253--1272.

\bibitem{Ivanov1} Ivanov, N.V.: Stabilization of the homology of Teichm\"uller modular groups. Algebra i Analiz {\bf 1}, 110--126  (1989); Leningrad Math. J. {\bf 1}, 675--691 (1990)

%\bibitem{Nakaoka} M. Nakaoka, {\it Decomposition theorem for homology groups of symmetric groups}, Ann. of Math. (2) {\bf 71} (1960), 16--42.

\bibitem{Randal-Williams} Randal-Williams, O.: Resolutions of moduli spaces and homological stability. arXiv:0909.4278 (2009)

\bibitem{Powell} J. Powell, {\it Two theorems on the mapping class group of a surface}, Proc. Amer. Math. Soc. {\bf 68} (1978), no. 3, 347--350.

%\bibitem{Segal} G. Segal, {\it Configuration spaces and iterated loop spaces}, Invent. Math. {\bf 21} (1973), 213--221.

\bibitem{Song-Tillmann} Song, Y., Tillmann, U.: Braid, mapping class groups, and categorical delooping. Math. Ann. {\bf 339}, 377--393 (2007)

\bibitem{Tillmann2} Tillmann, U.: On the homotopy of stable mapping class group. Invent. Math. {\bf 130}, 257--275 (1997)

\bibitem{Tillmann3} Tillmann, U.: A splitting for the stable mapping class group. Math. Proc. Camb. Phil. Soc. {\bf 127}, 55--65 (1999)

\bibitem{Tillmann} Tillmann, U.: Artin's map in stable homology. Bull. Lond. Math. Soc. {\bf 39}, 989--992 (2007)

%\bibitem{Wag} J. B. Wagoner, {\it Delooping classifying spaces in algebraic K-theory}, Topology {\bf 11} (1972), 349--370.

\bibitem{Wahl} Wahl, N.: Homological stability for mapping class groups of surfaces. ArXiv:1006.4476 (2010)

%\bibitem{Wajnryb} B. Wajnryb, {\it A simple representation for mapping class group of an orientable surface}, Israel Journal of Math. {\bf 45} (1983), 157--174.


\end{thebibliography}
\end{document}